\newcommand{\ep}{\varepsilon}
\newcommand{\CC}{{\mathbb{C}}}
\newcommand{\HH}{{\mathbb{H}}}
\newcommand{\FF}{{\mathbb{F}}}
\newcommand{\PP}{{\mathbb{P}}}
\newcommand{\ZZ}{{\mathbb{Z}}}
\newcommand{\calA}{{\mathcal A}}
\newcommand{\calM}{{\mathcal M}}
\newcommand{\calX}{{\mathcal X}}
\def\ZZ{{\mathbb Z}}
\def\PP{{\textbf P}}
\newcommand{\op}{\operatorname}
\newcommand{\SpF}{\op{Sp}(2g,\FF_2)}
\newcommand{\SpZ}{\op{Sp}(2g,\ZZ)}
\newcommand{\SpFthree}{\op{Sp}(6,\FF_2)}
\newcommand{\SpFtwo}{\op{Sp}(4,\FF_2)}
\newcommand{\SpZthree}{\op{Sp}(6,\ZZ)}
\newcommand{\SpZtwo}{\op{Sp}(4,\ZZ)}
\newcommand{\Pic}{\op{Pic}}
\newcommand{\T}{\Theta}
\renewcommand{\t}{\theta}
\renewcommand\tt[2]{\t\left[\begin{matrix}#1\\ #2\end{matrix}\right]}
\newcommand\stt[2]{\t\left[\begin{smallmatrix}#1\\ #2\end{smallmatrix}\right]}
\newcommand\tc[2]{{\left[\begin{matrix}#1\\ #2\end{matrix}\right]}}
\newcommand\stc[2]{{\left[\begin{smallmatrix}#1\\ #2\end{smallmatrix}\right]}}
\def\tch#1#2{{\left[\begin{matrix}#1\\ #2\end{matrix}\right]}}
\def\tt#1#2{{\t\tch{#1}{#2}}}
\newcommand{\s}{\sigma}
\def\aa{\overline{\mathcal{A}}}
\theoremstyle{plain}
\newtheorem{thm}{Theorem}
\newtheorem{lm}[thm] {Lemma}
\newtheorem{prop}[thm]{Proposition}
\theoremstyle{definition}
\newtheorem{rem}[thm]{Remark}
\begin{document}
\title{On the Coble quartic }

\author{Samuel Grushevsky}
\address{Mathematics Department, Stony Brook University,
Stony Brook, NY 11790-3651, USA}
\email{sam@math.sunysb.edu}
\thanks{Research of the first author is supported in part by National Science Foundation under the grant DMS-12-01369.}
\author{Riccardo Salvati Manni}
\address{Universit\`a ``La Sapienza'', Dipartimento di Matematica, Piazzale A. Moro 2, I-00185, Roma,   Italy}
\email{salvati@mat.uniroma1.it}

\begin{abstract}
We review and extend the known constructions relating Kummer threefolds, G\"opel systems, theta constants and their derivatives, and the GIT quotient for 7 points in $\PP^2$ to obtain an explicit expression for the Coble quartic.
The Coble quartic was recently determined completely in \cite{sturmfels}, where (Theorem 7.1a) it was computed completely explicitly, as a polynomial with 372060 monomials of bidegree $(28,4)$ in theta constants of the second order and theta functions of the second order, respectively. Our expression is in terms of products of theta constants with characteristics corresponding to G\"opel systems, and is a polynomial with 134 terms. 

Our approach is based on the beautiful geometry studied by Coble \cite{coblebook} and further investigated by Dolgachev and Ortland in \cite{doorbook}, and highlights the geometry and combinatorics of syzygetic octets of characteristics, and the corresponding representations of $\SpFthree$. One new ingredient is the relationship of G\"opel systems and Jacobian determinants of theta functions.

In genus 2, we similarly obtain a short explicit equation for the universal Kummer surface, and relate modular forms of level two to binary invariants of  six points on $\PP^1$.
\end{abstract}
\maketitle

\section{Introduction}
The existence of this paper is due to the appearance of the paper \cite{sturmfels}, and to the interest of its authors Qingchun Ren, Steven Sam, Gus Schrader, and Bernd Sturmfels in this circle of classical ideas in algebraic geometry. Their work inspired us to revisit and reconsider the classical constructions originating with Coble.
In fact    they study the defining ideal of the universal Kummer threefold in $\PP^7\times\PP^7$ (see \cite[Conj.~8.6]{sturmfels} for a complete conjectural
description of this ideal), and give an explicit equation for the (universal) Coble quartic: the Jacobi modular form of weight (16,4) or equivalently the bidegree $(28,4)$ polynomial on $\PP^7\times\PP^7$ in theta constants and  theta functions of the second order.
The Coble quartic is the unique polynomial invariant under the action of the symplectic group, and its eight partial derivatives with respect to the second set of variables give the 8 defining cubic equations for Kummer threefolds. The investigation of this hypersurface goes back to Coble himself. In his book \cite[p.~106]{coblebook} Coble gives  an implicit equation for the quartic, writing it as
$$\alpha Q_1+2\alpha_1 Q_2+\dots2\alpha_7 Q_8+4\alpha_{423} Q_9+\dots+4\alpha_{456} Q_{15},$$
where $Q_i$  are explicit quartics in theta functions of the second order, listed explicitly in section \ref{sec:coble} of this paper.
About the coefficients $\alpha_{\ldots}$ Coble states \cite[p.~196]{coblebook}:

{\em ``The 15 coefficients  $\alpha$ of the quartic spread  $L^4$ in $S_7$
can be expressed linearly with numerical coefficients in terms of the G\"opel invariant of $\PP^2_{7}$
and conversely.\dots''}

The aim of this paper is to follow  Coble's idea and  give an equation for Coble's quartic in this way,
with explicit formulas for the coefficients $\alpha$. One new tool in our approach compared to \cite{sturmfels} are the gradients at $z=0$ of theta functions with odd characteristics. It is a classical result that these are related to bitangents of plane quartic curves, see \cite{case1} and \cite{case2} (and \cite{doorbook} for much more of the beautiful classical geometry). This classical relationship gives a geometric viewpoint of our paper, providing a connection (in fact constructing a homomorphism) between the ring of GIT invariants of  seven points in $\PP^2$  and the  ring of modular forms of genus 3 and level 2. This allows us to express the ``G\"opel invariants'' (or Fano G\"opel coordinates in \cite{sturmfels})
in terms of theta constants with characteristics.  We then use representation theory
of $\SpFthree$ to construct the Coble quartic.  Indeed,  the image of the 15-dimensional  vector  space spanned by the coefficients $\alpha$ of the Coble quartic is a 15-dimensional space of  modular forms of genus 3 and level 2, of weight 14, known to be related to Gop\"el systems, and for
which we thus obtain expressions in terms of G\"opel systems. Similarly the invariant quartics in theta functions of the second order
form another 15-dimensional irreducible representation of the group $\SpFthree$. Since the tensor product of these two representations contains a unique copy of a trivial representation, this trivial representation is generated by the Coble quartic, and symmetrizing under the action of $\SpFthree$ must produce it (up to a constant factor) --- this is argued in theorem
\ref{cobleformula}, where an expression for the Coble quartic as a symmetrization (actually, under a smaller group, of
order 135) is obtained. We then perform a straightforward computation in Maple\textsuperscript{\textregistered},
giving $\alpha_{\ldots}$ above explicitly in terms of a basis for the space of modular forms generated by Fano configurations --- the resulting
expression is an explicit polynomial with 134 monomials, presented in theorem \ref{explicitcoble}. Of course
the resulting expression agrees with the result of \cite{sturmfels}, though it is written in terms of different
variables (theta constants with characteristics).

We note also that the story for $g=2$ in many aspects parallels the situation for $g=3$, but is of course easier. We develop this story
in parallel with the Coble quartic, and in theorem \ref{thm:genus2} determine explicitly the relation between the ring of modular forms of genus 2 and level 2, and the GIT quotient of 6 points on $\PP^1$ (similar to the relationship of modular forms of genus 3 and level 3, and the GIT quotient of 7 points on $\PP^2$ discussed above).

\section*{Acknowledgements}
The first author thanks Qingchun Ren, Steven V Sam, Gus Schrader, and especially Bernd Sturmfels for interesting conversations.  We are grateful to Steven Sam and Bernd Sturmfels for many useful comments on an earlier version of this manuscript. We thank Qingchun Ren for  verifying numerically that our results agree with those of \cite{sturmfels} and finding a computational mistake in an earlier version of our Maple\textsuperscript{\textregistered} spreadsheet.

The second author thanks Corrado De Concini for   conversations about the ring of invariants.

We thank Eberhard Freitag for computing for  us the tensor product of representations of $\SpFthree$ used in theorem \ref{cobleformula}.

\section{Combinatorics of theta characteristics}
In this section we recall some known facts about the action of the symplectic group $\SpF$ on the set of theta characteristics. The main references are the classical books on theta functions, eg.~Wirtinger \cite{wirtingerbook}, Krazer \cite{krazer}, Coble \cite{coblebook}, with a more modern treatment given by Igusa \cite{igusabook}, and many details that we need are investigated in \cite{smlevel2}.  Almost all of these facts are of course discussed in \cite{sturmfels}, but we collect here all that we need,  as a potential convenient reference for theta constants, and use this to fix notation.

A {\em theta characteristic} $m$ is an element
of $\FF_2^{2g}$ (where we think of the elements of $\FF_2$ as being 0 and 1), which we will often write as $m=\tc{m'}{m''}$  with $m'$ and $m''$ considered as row vectors in $\FF_2^g$. We define the parity as
\begin{equation}\label{eg:sign}
e(m):=(-1)^{^{t}m'\cdot m''}
\end{equation}
and say that $m$ is even or odd according to whether $e(m)$ is equal to $1$ or $-1$, respectively.

For any triple $m_1, m_2, m_3$ of characteristics we set
\begin{equation}\label{eg:syzygy}
e(m_1, m_2, m_3):=e(m_1)\cdot e(m_2)\cdot e(m_3)\cdot e(m_1+m_2+m_3),
\end{equation}
and call a triple {\em syzygetic} or {\em azygetic} depending on whether this number is $1$ or $-1$, respectively.

The action of $\SpF$ on the set of characteristics is given by
$$
  \gamma\cdot m:=\left(\begin{array}{cc}
   D &  -C \\
   -B  & A \end{array}\right)\tc{m'}{m''}+ \left(\begin{array}{c}
  diag(C^t D)\\
  diag(A^t B)\end{array}\right),
$$
where as usual we write $\gamma\in\SpF$ in the block form as $\gamma=\left(\begin{matrix} A&B\\ C&D\end{matrix}\right)$. This affine-linear action
preserves the parity, azygy/syzygy, and linear relations with an even number of terms. That is to say, the orbits of this action on tuples of characteristics are described as follows: there exists an element of $\SpF$ mapping a sequence of characteristics $m_1,\ldots,m_l$ to a sequence of characteristics $n_1,\ldots,n_l$ if and only if for any $1\le i,j,k\le l$ we have $e(m_i)=e(n_i),$ $e(m_i,m_j,m_k)=e(n_i,n_j,n_k)$, and linear relations among $m_i$'s with an even number of terms are in bijection with such relations for the $n_i$'s, see \cite{smlevel2}.

A {\em fundamental system} of characteristics is a set of
$2g+2$ characteristics such that any triple is azygetic; we will consider fundamental systems as {\it unordered} sets of theta characteristics. By the above, two fundamental systems belong to the same $\SpF$ orbit if and only if they contain the same number of odd characteristics. In fact, a fundamental system with $k$ odd theta characteristics exists if and only if $k$ is congruent to $g$ modulo 4, see \cite{fayriemann}.
A {\em special fundamental system} is a fundamental system containing $g$ odd characteristics and $g+2$ even characteristics --- all special fundamental systems form one $\SpF$ orbit.

\smallskip
In this paper, we are mostly interested in the cases $g=2$ and $g=3$.   For genus 2, fundamental systems consist of 6 characteristics, of which either 2 or 6 may be odd. The unique fundamental system with 6 odd characteristics is simply the set of all 6 odd characteristics:
\begin{equation}\label{example2}
\tc{01}{01},\tc{01}{11},\tc{10}{10},\tc{10}{11},\tc{11 }{01 }, \tc{11 }{10 }.
\end{equation}
In genus 2, we also have the following easy combinatorial
\begin{lm}\label{lm:genus2}
For any azygetic triple of odd characteristics $m_1,m_2,m_3$ there exist a unique even characteristic
$n_0$ such that the quadruple $n_0,m_1, m_2,m_3$ is azygetic, and it is given by $n_0=m_1+m_2+m_3$.
\end{lm}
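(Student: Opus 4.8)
The plan is to treat existence and uniqueness separately: the first is a one-line parity computation, the second a small exercise in the symplectic geometry over $\FF_2$ that underlies the parity function.

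For existence I would argue directly from \eqref{eg:syzygy}. Since $m_1,m_2,m_3$ are odd and azygetic, $-1 = e(m_1,m_2,m_3) = e(m_1)e(m_2)e(m_3)\,e(m_1+m_2+m_3) = -\,e(m_1+m_2+m_3)$, so $n_0 := m_1+m_2+m_3$ is even. To see that $n_0$ completes the triple to an azygetic quadruple (i.e.\ that every triple among the four is azygetic), note that in $\FF_2^{2g}$ one has $n_0+m_i+m_j = m_k$ whenever $\{i,j,k\}=\{1,2,3\}$; hence $e(n_0,m_i,m_j) = e(n_0)e(m_i)e(m_j)\,e(m_k) = (1)(-1)(-1)(-1) = -1$, so each of the three triples containing $n_0$ is azygetic as well. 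This proves existence and, simultaneously, the asserted formula $n_0=m_1+m_2+m_3$.

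For uniqueness I would pass to the symmetric bilinear form $\langle m,n\rangle := {}^tm'\!\cdot n''+{}^tm''\!\cdot n'$ on $\FF_2^{2g}$, which refines the parity in the sense that $e(m+n)=e(m)e(n)(-1)^{\langle m,n\rangle}$. Expanding the azygy condition this way turns the requirement that an even characteristic $n$ complete the triple into the linear conditions $\langle n,m_i\rangle+\langle n,m_j\rangle = 1+\langle m_i,m_j\rangle$ for the three pairs. Azygy of $m_1,m_2,m_3$ is precisely the statement $\langle m_1,m_2\rangle+\langle m_1,m_3\rangle+\langle m_2,m_3\rangle=1$, which forces $\langle m_1+m_2,\,m_1+m_3\rangle=1$; thus $m_1+m_2$ and $m_1+m_3$ span a nondegenerate $2$-plane $W$, the three conditions reduce to two independent ones, and their solution set is the affine plane $n_0+W^\perp$, consisting of exactly four characteristics.

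It remains to see that exactly one of these four is even, and this is the step I expect to carry the real content. The parities of the four solutions $n_0+d$, $d\in W^\perp$, are governed by $e(n_0+d) = (-1)^{\,q(d)+\langle n_0,d\rangle}$ with $q(d):={}^td'\!\cdot d''$, i.e.\ by the quadratic form $q(\cdot)+\langle n_0,\cdot\rangle$ on the nondegenerate symplectic plane $W^\perp$. Its polar form is the (nondegenerate) restriction of $\langle\cdot,\cdot\rangle$, so this quadratic form has a well-defined Arf invariant and hence either one or three zeros; the claim is that the Arf invariant equals $1$, leaving $n_0$ as the only even solution. I would establish this either by a direct computation of the Arf invariant on a symplectic basis of $W^\perp$, or—more economically—by invoking the orbit description recalled above: three distinct odd characteristics admit no even linear relations, so all azygetic odd triples form a single $\SpF$-orbit, and since parity and azygy are $\SpF$-invariant it suffices to check the count on the single explicit triple drawn from \eqref{example2}, a finite computation over the ten even characteristics of genus $2$.
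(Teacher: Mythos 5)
Your proof is correct, and its uniqueness half takes a genuinely different route from the paper's. The paper in fact prints no argument for this lemma at all---it is offered as the ``easy combinatorial'' fact---and the method it indicates for the parallel genus-3 statements (lemmas \ref{lm1} and \ref{lm2}) is exactly your closing fallback: transitivity of the symplectic group on azygetic odd triples (justified, as you note, because such triples are automatically distinct, have the same parities and syzygies, and admit no even-term linear relations, so the orbit criterion applies), followed by a finite verification on one explicit triple, here against the ten even characteristics of genus $2$. Your existence step is the same one-line parity computation any such check contains, and it is exactly right: azygy forces $n_0=m_1+m_2+m_3$ to be even, and $n_0+m_i+m_j=m_k$ makes each triple through $n_0$ azygetic. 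Where you diverge is in uniqueness: identifying the full solution set of the azygy conditions as the four-element affine plane $n_0+W^\perp$ and reducing the parity count to the zeros of the quadratic form $q(d)+\langle n_0,d\rangle$ on $W^\perp$ is sound---the identities $\langle m_1+m_2,m_1+m_3\rangle=1$, the dependence of the third linear condition on the other two, and the identification of the polar form with the restricted symplectic form all check out. What your route buys is uniformity in the genus: an Arf-invariant-$1$ form on the $(2g-2)$-dimensional space $W^\perp$ has $2^{2g-3}-2^{g-2}$ zeros, which equals $1$ for $g=2$ and $6$ for $g=3$, so the same argument simultaneously explains the count of six even completions in lemma \ref{lm1}; the paper's implicit route buys only brevity. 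The one loose end is that you assert the Arf invariant equals $1$ without executing either verification; since your fallback (orbit transitivity plus the finite check on the triple from \eqref{example2}) is complete and is precisely the paper's method, this is not a gap, but note that until the symplectic-basis computation is actually carried out, that finite check is what pins down the Arf invariant, so the structural layer is an illuminating reformulation rather than an independent proof.
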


 The rest of this section is devoted to the case $g=3$. Then fundamental systems consist of 8 characteristics, of which either 7 or 3 may be odd. In the first case, we have
fundamental systems of the form
$$
  n_0, m_1, m_2,\dots, m_7
$$
with $n_0$ even  and all $m_i$ odd. Classically, in this case the set of odd characteristics
$m_1, m_2,\dots, m_7$ is  called  an Aronhold set. We refer to \cite{doorbook} for a beautiful modern exposition of the theory and many classical and new results.  Thus the  group $\SpFthree$ acts transitively and faithfully on the collection of Aronhold sets, and the number of  ordered different Aronhold sets is equal to $|\SpFthree|=36\cdot8!=1451520$.

An example of an Aronhold set is the following:
\begin{equation}\label{example}
\tc{111}{111},\tc{110}{100},\tc{101}{001},\tc{100}{110},\tc{010}{011},\tc{001}{101},\tc{011}{010}
\end{equation}
This Aronhold set is completed to a fundamental system by adding the zero characteristic
$\tc{000}{000}$.

We now collect some needed results about the combinatorics of Aronhold sets for $g=3$:
\begin{lm}\label{lm1}
For any azygetic triple of odd characteristics $m_1,m_2,m_3$ there exist 6 even characteristics
$n_1,\ldots,n_6$ such that the quadruple $m_1, m_2,m_3, n_j$ is azygetic. One of these characteristics (label it $n_6$) is equal to the sum $n_6=m_1+m_2+m_3$. Then the set
$$
  m_1, m_2,m_3, n_1,\dots, n_5
$$
forms a  special fundamental system. In particular, every azygetic triple of odd characteristics is contained in (and defines) a unique, up to permutations, special fundamental system.
\end{lm}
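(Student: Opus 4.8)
The plan is to linearize parity and azygy. Writing $e(m)=(-1)^{q(m)}$ with $q(m):={}^tm'm''\in\FF_2$, one has $q(m_1+m_2)=q(m_1)+q(m_2)+b(m_1,m_2)$ for the standard symplectic form $b(m,n):={}^tm'n''+{}^tm''n'$, so $q$ is a quadratic form with polarization $b$. Expanding $q(m_1+m_2+m_3)$ gives the reformulation
\begin{equation*}
e(m_1,m_2,m_3)=(-1)^{b(m_1,m_2)+b(m_1,m_3)+b(m_2,m_3)},
\end{equation*}
so a triple is azygetic exactly when the sum of its three pairwise symplectic products equals $1$. Every azygy condition in the statement thereby becomes an affine-linear condition on symplectic pairings, which is what turns the whole lemma into a finite linear-algebra exercise over $\FF_2$.

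To count the admissible even $n$, set $x_i:=b(m_i,n)$. The three requirements that $m_i,m_j,n$ be azygetic are three linear equations in $(x_1,x_2,x_3)$, and their sum holds automatically because $m_1,m_2,m_3$ is azygetic; hence the system has rank $2$ and its solutions form a two-element set $\{t_0,\,t_0+(1,1,1)\}\subset\FF_2^3$. So the admissible $n$ fill two cosets of $\langle m_1,m_2,m_3\rangle^{\perp}$, and selecting the even ones amounts to restricting $q$ to each coset; carrying this out (most cleanly for the representative of the next paragraph) yields exactly six. I would note at once that $n_6:=m_1+m_2+m_3$ is forced to be even, since $q(n_6)=\sum_i q(m_i)+\sum_{i<j}b(m_i,m_j)=1+1=0$, and that it realizes the pairing vector $t_0=(0,0,0)$, so it is one of the six.

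To pin down the count and the remaining azygies I would reduce to one explicit triple. By the orbit criterion recalled above, any two azygetic triples of distinct odd characteristics agree in all parities and triple syzygies and (being distinct) admit no even-length linear relation, so they form a single $\SpFthree$-orbit; it therefore suffices to treat three of the odd characteristics of the Aronhold set \eqref{example} and transport by some $\gamma\in\SpFthree$. Here one uses that the affine action commutes with a sum of an \emph{odd} number of characteristics --- the translation term appears an odd, hence effectively single, number of times --- so that $\gamma$ sends $n_6$ of the model triple to $n_6=m_1+m_2+m_3$ of the image triple, and the labelling in the statement is respected.

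For the model triple I would list the six pairing-constrained even characteristics and check, straight from the displayed formula for $e(\cdot,\cdot,\cdot)$, that discarding $n_6$ leaves five even characteristics which together with $m_1,m_2,m_3$ render every triple azygetic, i.e.\ a special fundamental system. Uniqueness is the same bookkeeping run backwards: the five even members of any special fundamental system on $m_1,m_2,m_3$ lie among the six candidates, and retaining $n_6$ is impossible --- $n_6$ and the one other candidate whose pairings against $m_1,m_2,m_3$ all vanish are syzygetic with each $m_i$, while $n_6$ is syzygetic with each of the four remaining candidates against the $m_i$ --- so $n_6$ is precisely the candidate that must be omitted. The main obstacle is exactly this last step: disentangling the even--even--odd and even--even--even azygies among the six candidates to see that the compatible quintuple is unique. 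The quadratic-form reformulation together with the reduction to a single orbit representative is what keeps that step finite and transparent.
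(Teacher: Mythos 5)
Your proposal is correct and follows essentially the paper's own route: the paper likewise reduces, via transitivity of $\SpFthree$ on azygetic triples of odd characteristics, to a single model triple and then performs a finite combinatorial verification, which it organizes through the description of fundamental systems in Lemma~\ref{lm3} where you instead use the polarization identity $e(m_1,m_2,m_3)=(-1)^{b(m_1,m_2)+b(m_1,m_3)+b(m_2,m_3)}$ --- a difference only in how the finite check is bookkept. One small caution: your parenthetical claim that $n_6=m_1+m_2+m_3$ realizes the pairing vector $t_0=(0,0,0)$ is true for the model triple from~\eqref{example} (where all three pairings $b(m_i,m_j)$ equal $1$) but is not orbit-invariant --- an azygetic triple may instead have exactly one nonzero pairwise pairing, in which case $n_6$ realizes a weight-two vector --- though this does not damage the proof, since your argument transports the full configuration (including $n_6$, via the compatibility of the affine action with odd-length sums) from the model triple.
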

\begin{lm}\label{lm2}
If $m_1,m_2,m_3$ and $m_1, m_4, m_5$ are two azygetic triples of odd characteristics (sharing one characteristic) that are subsets of a fundamental system $n_0, m_1, m_2,\dots, m_7$, then the intersection of the special fundamental systems defined by $m_1,m_2,m_3$, and by $m_1,m_4,m_5$, respectively, consists of $m_1$ and $n_0$.
\end{lm}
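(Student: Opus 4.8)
The plan is to use the $\SpFthree$-equivariance of the whole construction to reduce to a single explicit configuration, and then read off the intersection. Since the symplectic group preserves parity and azygy, and since by Lemma~\ref{lm1} the special fundamental system attached to an azygetic triple of odd characteristics is canonically determined by that triple, the passage from $(n_0,m_1,\dots,m_7)$ together with the two triples to the pair of special fundamental systems $(S_{123},S_{145})$ is $\SpFthree$-equivariant; hence so is their intersection. Now $\SpFthree$ acts transitively on fundamental systems with seven odd characteristics, and since it acts simply transitively on ordered Aronhold sets (their number being $|\SpFthree|$), the stabilizer of an unordered fundamental system induces the full symmetric group on its seven odd characteristics while fixing the even characteristic $n_0$. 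Two azygetic triples of odd characteristics in the system that share a single characteristic are therefore all equivalent, under this stabilizer, to the pair $\{m_1,m_2,m_3\}$ and $\{m_1,m_4,m_5\}$ for the Aronhold set of \eqref{example} completed by $n_0=\tc{000}{000}$. It thus suffices to treat this one configuration.

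First I would record the two inclusions $m_1,n_0\in S_{123}\cap S_{145}$. The characteristic $m_1$ lies in both defining triples, hence in both special systems. For $n_0$, note that $n_0,m_1,m_2,m_3$ all lie in the fundamental system, so every triple among them is azygetic and the quadruple $n_0,m_1,m_2,m_3$ is azygetic; by Lemma~\ref{lm1}, $n_0$ is then one of the six even characteristics completing $m_1,m_2,m_3$ to an azygetic quadruple. A one-line computation in the chosen representative shows $n_0\neq m_1+m_2+m_3$, the single such even characteristic that Lemma~\ref{lm1} excludes from the special system, so $n_0\in S_{123}$; the identical argument gives $n_0\in S_{145}$.

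For the reverse inclusion I would split the common elements into odd and even ones. By Lemma~\ref{lm1} a special fundamental system contains exactly three odd characteristics, namely its defining triple; thus the odd elements of $S_{123}$ are exactly $m_1,m_2,m_3$ and those of $S_{145}$ are exactly $m_1,m_4,m_5$, and because $m_1,\dots,m_5$ are distinct the only odd characteristic they share is $m_1$. The remaining and decisive point is that $n_0$ is the only even characteristic in $S_{123}\cap S_{145}$. Here I would use the reformulation that a triple $a,b,c$ is azygetic precisely when $\langle a,b\rangle+\langle a,c\rangle+\langle b,c\rangle=1$ in $\FF_2$, where $\langle\cdot,\cdot\rangle$ is the symplectic pairing; since the seven odd characteristics of the system are pairwise paired to $1$, this turns membership of an even $n$ in $S_{123}$ into the linear conditions $\langle m_1,n\rangle=\langle m_2,n\rangle=\langle m_3,n\rangle$ (together with $n\neq m_1+m_2+m_3$), and similarly for $S_{145}$. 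Solving these in the explicit representative produces five even characteristics in each special system, and comparing the two lists shows they meet only in $n_0$.

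The hard part will be exactly this last comparison of the even characteristics: everything else is formal, but confirming that no second even characteristic is balanced against all five of $m_1,\dots,m_5$ simultaneously requires the finite enumeration above. The cleanest way to see it --- and a good consistency check on the computation --- is the classical description (as in \cite{doorbook}) of genus-three characteristics as two- and four-element subsets of an eight-element set, under which a fundamental system of seven-odd type becomes $\{\{i,8\}\}_{i=1}^{7}$ together with the empty set $n_0$, the triples $\{m_1,m_2,m_3\}$ and $\{m_1,m_4,m_5\}$ fix the index triples $\{1,2,3\}$ and $\{1,4,5\}$, and the even members of $S_{123}$ and $S_{145}$ become the four-subsets containing these triples; such a four-subset common to both would contain $\{1,2,3,4,5\}$, which is impossible, so the only shared even characteristic is the empty set, i.e.\ $n_0$.
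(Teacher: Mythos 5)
Your proposal is correct, and its combinatorial core coincides with the paper's, though you package it differently. The paper reduces to a single configuration by transitivity and then reads everything off Lemma \ref{lm3} and the remark following it: the even part of the special system of $m_1,m_2,m_3$ is $n_0$ together with the four sums $m_a+m_b+m_c$ with $\{a,b,c\}\subset\{4,5,6,7\}$, the even part for $m_1,m_4,m_5$ is $n_0$ together with the sums over triples in $\{2,3,6,7\}$, and since the $35$ triple sums are pairwise distinct even characteristics, a common one would force a triple inside $\{4,5,6,7\}\cap\{2,3,6,7\}=\{6,7\}$, which is impossible. Your subsets-of-$\{1,\dots,8\}$ dictionary is exactly this lemma in disguise ($m_i\leftrightarrow\{i,8\}$, $m_i+m_j+m_k\leftrightarrow\{i,j,k,8\}$, whose complement is the $4$-set containing the residual triple), so your ``would contain $\{1,2,3,4,5\}$'' step is the complementary reformulation of the paper's ``no triple inside $\{6,7\}$''. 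What you add is worthwhile: a genuinely more careful reduction --- the paper's one-line appeal to transitivity on azygetic triples does not by itself transport the ambient fundamental system and the second triple, whereas your simple transitivity on ordered Aronhold sets, giving a stabilizer that induces all of $S_7$ and fixes $n_0$, is what is actually needed --- plus an independent characterization of the even members by the linear conditions $e(m_1,n)=e(m_2,n)=e(m_3,n)$. Two micro-repairs to your final paragraph: the even members of the special system are the classes of $\{1,2,3,j\}$ with $j\in\{4,5,6,7\}$ only, since $\{1,2,3,8\}$ corresponds to $m_1+m_2+m_3$, which Lemma \ref{lm1} excludes (harmless here, as including it only enlarges the two families you prove disjoint); and because characteristics are subsets modulo complementation, equality of classes also allows one representative to be the complement of the other --- this case should be ruled out explicitly, which takes one line: both index triples contain $1$, and a set and its complement cannot both contain $1$.
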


Since the group $\SpFthree$ acts transitively on azygetic triples of odd characteristics, it is enough to verify these lemmas for just one such odd triple $m_1,m_2,m_3$. This can be checked by hand, but more easily follows from the following combinatorial
\begin{lm}\label{lm3}
For any fundamental system $n_0,m_1,\ldots,m_7$, the other 21 odd characteristics are obtained as all
possible sums $n_0+m_i+ m_j $  with $1\leq i<j\leq 7$, while the 36 even characteristics are $n_0$ together with the 35 sums $m_i+ m_j +m_k$  with $1\leq i<j<k\leq 7$.
\end{lm}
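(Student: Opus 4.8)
The plan is to reduce the entire statement to a single finite check on the explicit fundamental system \eqref{example}, while handling the parity assertions uniformly by means of the quadratic form. The crucial observation is that the affine-linear action of $\SpFthree$ is \emph{equivariant on sums of an odd number of characteristics}: writing $\gamma\cdot m = L_\gamma(m)+c_\gamma$ with $L_\gamma$ linear and $c_\gamma\in\FF_2^6$, for any characteristics $p_1,\dots,p_{2r+1}$ one has $\gamma\cdot(p_1+\cdots+p_{2r+1}) = L_\gamma(\sum_i p_i)+c_\gamma = \sum_i(L_\gamma(p_i)+c_\gamma)=\sum_i\gamma\cdot p_i$, since $c_\gamma$ is added $2r+1\equiv 1$ times. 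In particular $\gamma\cdot(n_0+m_i+m_j)=\gamma\cdot n_0+\gamma\cdot m_i+\gamma\cdot m_j$, and likewise for $m_i+m_j+m_k$. Since $\SpFthree$ acts transitively on fundamental systems (equivalently on Aronhold sets), and any $\gamma$ permutes all characteristics bijectively while preserving parity, the two set-equalities asserted in the lemma for one fundamental system transport to every other. Hence it suffices to prove the lemma for \eqref{example} together with $n_0=\tc{000}{000}$.

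The parity statements can in fact be established for all fundamental systems at once. Put $q(m):={}^t m'\cdot m''\in\FF_2$, so $e(m)=(-1)^{q(m)}$, and let $\langle m,n\rangle:={}^t m' n''+{}^t n' m''$ be the associated symplectic form, so that $q(m+n)=q(m)+q(n)+\langle m,n\rangle$. Expanding gives $q(m_1+m_2+m_3)\equiv \sum_i q(m_i)+\langle m_1,m_2\rangle+\langle m_1,m_3\rangle+\langle m_2,m_3\rangle$, and a short computation shows that a triple is azygetic precisely when $\langle m_1,m_2\rangle+\langle m_1,m_3\rangle+\langle m_2,m_3\rangle\equiv 1$. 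As every triple in a fundamental system is azygetic, for odd $m_i,m_j$ and even $n_0$ we get $q(n_0+m_i+m_j)\equiv 0+1+1+1\equiv 1$, so $n_0+m_i+m_j$ is odd, while for odd $m_i,m_j,m_k$ we get $q(m_i+m_j+m_k)\equiv 1+1+1+1\equiv 0$, so $m_i+m_j+m_k$ is even. This produces $21$ odd and $35$ even characteristics of the asserted shapes.

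It then remains to show these are all distinct and exhaust the remaining characteristics, and this is where the one genuine input enters: determining the $\FF_2$-linear relations among $n_0,m_1,\dots,m_7$. I claim the only relation with an even number of terms is the total one $n_0+m_1+\cdots+m_7=0$. Being even-term, this relation is preserved by the action, so by the reduction above it is enough to verify it on \eqref{example}, where $n_0=0$ and a direct rank computation shows $m_1,\dots,m_6$ are linearly independent with $m_7=m_1+\cdots+m_6$; thus the even-term relation module is one-dimensional, spanned by the total relation. Granting this, there is no four-term relation $m_i+m_j+m_k+m_l=0$, no six-term relation among the $m_i$, and no relation $n_0+m_i+m_j+m_k=0$, since all of these are even-term but not the total relation. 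Any putative coincidence among the $21$ sums, or among the $35$ sums, or with an $m_i$ or with $n_0$, yields after cancelling common terms an even-term relation supported on a proper subset, hence is excluded; the distinctness claims follow. Counting then finishes the argument: $7+21=28$ distinct odd characteristics and $1+35=36$ distinct even ones, matching the total numbers of odd and even characteristics in genus $3$. The main obstacle is precisely this last bookkeeping — ruling out all coincidences — which I expect to dispatch cleanly once the even-term relation module has been pinned down to the single total relation.
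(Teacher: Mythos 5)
Your proposal is correct, and it is worth pointing out that the paper gives no proof of this lemma at all: it is stated as a classical combinatorial fact (whose role is to make Lemmas \ref{lm1} and \ref{lm2} checkable), with the verification left implicit. Your argument supplies a complete, self-contained proof along natural lines, and both structural points you isolate are sound. First, the affine action satisfies $\gamma\cdot(p_1+\cdots+p_{2r+1})=\sum_i \gamma\cdot p_i$ because the translation part $c_\gamma$ occurs an odd number of times; combined with the paper's quoted transitivity of $\SpFthree$ on Aronhold sets, this correctly transports the statement from the single example \eqref{example}. Second, the polarization identity $q(m+n)=q(m)+q(n)+\langle m,n\rangle$ does turn azygy of a triple into $\langle m_1,m_2\rangle+\langle m_1,m_3\rangle+\langle m_2,m_3\rangle\equiv 1$, whence $q(n_0+m_i+m_j)=1$ and $q(m_i+m_j+m_k)=0$ uniformly, as you claim. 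I checked your rank computation: for \eqref{example} the vectors $m_1,\dots,m_6$ are indeed independent in $\FF_2^6$ with $m_7=m_1+\cdots+m_6$, so with $n_0=0$ the even-term relation module is exactly $\{0,\; n_0+m_1+\cdots+m_7=0\}$; since even-term relations (unlike odd-term ones --- note that the 1-term relation $n_0=0$ does \emph{not} transport under the affine action) are preserved by $\SpFthree$, this holds for every fundamental system, and your cancellation argument then excludes every possible coincidence among the $7+21$ odd and $1+35$ even expressions, matching the totals $28=2^2(2^3-1)$ and $36=2^2(2^3+1)$. The one caveat: the transitivity on Aronhold sets you invoke is itself established in \cite{smlevel2} via an orbit criterion that includes matching of even-term linear relations, so you are right to cite it as a known input rather than attempt to re-derive it --- as written there is no circularity, but your proof does genuinely depend on that quoted fact, whereas a hand-check of the example alone (the paper's implicit route) would not make the reduction step explicit. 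What your route buys is precisely that explicitness: a clean separation into an equivariance principle, a uniform parity computation, and a one-line linear-algebra fact.
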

\begin{rem}
Explicitly, if an azygetic odd triple $ m_1, m_2,m_3$ is part of the fundamental system   $n_0, m_1, m_2,\dots, m_7$, then the corresponding special fundamental system is given by
$$
  \lbrace m_1,m_2,m_3,n_0,m_4+m_5+m_6,m_4+m_5+m_7,m_4+m_6+m_7,m_5+m_6+m_7 \rbrace.
$$
\end{rem}

We define the symplectic form on $\FF_2^6$ by
$$
  e(m, n):=(-1)^{m'^tn''-m''^tn'}.
$$
Then the number of Lagrangian subspaces of $\FF_2^6$, called G\"opel systems, is equal to 135, see  \cite{smrelations}.
We call a G\"opel system a {\it Fano configuration} if all 8 characteristics in it are even, and call it a {\it Pascal configuration} if four characteristics in the Lagrangian space are even, and four are odd. In genus 3 there are 30 Fano configurations and 105 Pascal configurations.
In both cases, there exists a unique affine subspace of $\FF_2^6$ modeled on the G\"opel systems, which consists only of even characteristics --- for a Fano configuration, this is the G\"opel system itself.

All Fano configurations can be obtained as follows: fix an Aronhold set $m_1,\ldots,m_7$. Then for any set of 7 triples of indices among $1,\ldots, 7$, such that any two triples have exactly one element in common, the seven corresponding triple sums $m_i+m_j+m_k$ form the set of non-zero elements of a Fano configuration, and all Fano configurations are obtained this way. For example, we could choose the 7 triples as
\begin{equation}\label{F}
  F:=\left\{ (123), (145), (167), (247),(256), (346), (357)\right\},
\end{equation}
so that for the Aronhold set example given in (\ref{example}) the corresponding Fano configuration formed by zero and the above 7 elements is
\begin{equation}\label{ex1}
\tc{000}{000},\tc{100}{010},\tc{001}{010},\tc{101}{000},\tc{001}{000},\tc{101}{010},\tc{000}{010},\tc{100}{000}.
\end{equation}

Similarly, starting with a fixed Aronhold set, the non-zero elements of any Pascal configuration can be described by taking sums of elements of the following form (so that there are three triple sums all sharing the same one element, each giving an even characteristic lying in the Pascal configuration, and the other 4 odd elements of a Pascal configuration are obtained as double sums forgetting the common characteristic, and the common characteristic itself)
\begin{equation}\label{P}
  P:=\left\{ (123), (145), (167), (1),(23), (45), (67)\right\},
\end{equation}
so that for the Aronhold set given in (\ref{example}) we get the Pascal configuration
\begin{equation}\label{ex2}
\tc{000}{000},\tc{100}{010},\tc{001}{010},\tc{101}{000},\tc{111}{111},\tc{011}{101},\tc{110}{101},\tc{010}{111}.
\end{equation}

\section{ Modular forms}
In this section we review the notation on modular forms and theta constants.
We denote by $\HH_g$ the {\em Siegel upper half-space} --- the space of
complex symmetric $g\times g$ matrices with positive definite imaginary
part. An element $\tau\in\HH_g$ is called a {\em period matrix}, and
defines the complex abelian variety $X_{\tau}:=\CC^g/\ZZ^g+\tau \ZZ^g$. The
group $\Gamma_g:={\rm Sp}(2g,\ZZ)$ acts on $\HH_g$ by automorphisms: for
$$
  \gamma=\left(\begin{array}{cc}
  A&B\\ C&D\\
  \end{array}\right)
  \in{\rm Sp}(2g,\ZZ)
$$
the action is
$\gamma\circ\tau:=(A\tau+B)(C\tau+D)^{-1}$. The quotient of $\HH_g$ by the
action of the symplectic group is the moduli space of principally
polarized abelian varieties (ppav): $\calA _g:=\HH_g/{\rm Sp}(2g,\ZZ)$.
We define the {\em level subgroups}  of the symplectic group to be
$$
  \Gamma_g(n):=\left\lbrace\gamma
  \in\SpZ\, |\, \gamma\equiv\left(\begin{array}{cc}
   1&0\\ 0&1\\
  \end{array}\right)  {\rm mod}\, n \right\rbrace
$$
The corresponding {\em level  $n$ moduli space of ppav} is denoted
$\calA_g(n):=\HH_g/\Gamma_g(n)$.

A function $F:\HH_g\to\CC$ is called a {\em modular form of weight $k$ and multiplier  $\chi$
with respect to a subgroup $\Gamma\subset\Gamma_g$} if
$$
  F(\gamma\circ\tau)=\chi(\gamma)\det(C\tau+D)^kF(\tau),\quad \forall \gamma
  \in\Gamma,\ \forall \tau\in\HH_g.
$$
  We shall write  $[\Gamma, k,\chi]$ for this  space. We omit the character if it is trivial.
We shall define the ring of modular forms as
\begin{equation}\label{modforms}
  A(\Gamma)=\bigoplus_{k=0}^{\infty}[\Gamma, k].
\end{equation}
This is a finitely generated graded ring.
For any theta characteristic $m\in\FF_2^g$, we define the {\em theta function with
characteristic $m$} to be the map $\theta_m:\HH_g\times\CC^g\to\CC$ given by
$$
  \t_m(\tau, z):=\sum_{p\in\ZZ^g} \exp \pi i \left[
  \left(p+\frac{m'}{2}\right)^t\cdot\left(\tau(p+\frac{m'}{2})+2(z+
  \frac{m''}{2})\right)\right].
$$
Sometimes we will write $\tt{m'}{m''}(\tau,z)$ for $\t_m(\tau,z)$.

For $\ep\in\FF_2^g$ we also define the {\em second order theta function with characteristic $\ep$} to be
$$
\T[\ep](\tau,z):=\tt{\ep}{0}(2\tau,2z).
$$
The transformation law for theta functions under the action of the
symplectic group is given in \cite{igusabook}:
$$
\theta\left[\gamma  \left(\begin{matrix}m'\\ m''\end{matrix}\right)
   \right](\gamma\circ\tau,(C\tau+D)^{-t})z)= $$
 $$\phi(m,\gamma)\det
 (C\tau+D)^{1/2}e^{\pi i \left( z^t(C\tau+D)^{-1}Cz\right)}\tt {m'} {m''}(\tau,z),
$$
where $\phi$ is some complicated explicit function   (which we will discuss in more detail before theorem \ref{explicitcoble}), and the action of $\gamma$ on characteristics is taken modulo $2$. It is further known (see \cite{igusabook}, \cite{smnullwerte}) that $\phi|_{z=0} $ does not depend on $\tau$. Thus the values of theta functions   at $z=0$, called {\em theta constants}, are modular forms of weight one half with multiplier, with respect to $\Gamma_g(2)$. Similarly it is known that the theta constants of second order are modular forms of weight one
half with respect to a certain normal subgroup $\Gamma_g(2,4)\subset\SpZ$ (containing $\Gamma_g(4)$ and contained in $\Gamma_g(2)$).

All odd theta constants with characteristics vanish identically, as the corresponding theta functions are odd functions of $z$, and thus there are $2^{g-1}(2^g+1)$ non-trivial theta constants with characteristics, corresponding to even theta characteristics $m$. All theta functions of the second order are even with respect to $z$, and all $2^g$ theta constants of the second order are not identically zero.

Given a set of $g$ odd characteristics $m_1,\ldots, m_g$, one constructs the {\em Jacobian determinant} from the gradients with respect to $z$ of
the corresponding theta functions, evaluated at $z=0$:
$$
  D(m_1,\dots, m_g)(\tau):=\left.\vec{\rm grad}_z\theta_{m_1}(\tau,z)\wedge\dots\wedge \vec{\rm grad}_z\theta_{m_g}(\tau, z)\right|_{z=0},
$$
which is a modular form of weight $\frac{g+2}{2}$ with respect to $\Gamma_g(2)$, with a suitable multiplier.

  In genus 1, the famous Jacobi derivative formula
$$
 D\left(\tc11\right)=-\pi\tt00\tt01\tt10
$$
expresses the only Jacobian determinant as a product of theta constants. This formula has been generalized to
genus 2 and 3 by Igusa \cite{igusajacobi}, and the results are as follows.
In genus 2, for any special  fundamental system   $m_1,m_2, n_1, n_2, n_3, n_4$ we have
\begin{equation}\label{Jacder2}
  D(m_1, m_2) = -\pi^2\t_{n_1}\t_{n_2}\t_{n_3}\t_{n_4}
\end{equation}

In genus 3, for any azygetic triple of odd characteristics $m_1,m_2,m_3$ forming a special fundamental system with $n_1,\ldots,n_5$ (see lemma \ref{lm1}), we have
\begin{equation}\label{Jacder}
  D(m_1, m_2,m_3) = -\pi^3\t_{n_1}\t_{n_2}\t_{n_3}\t_{n_4}\t_{n_5}.
\end{equation}
In general, for higher genus it is known that the Jacobian determinant is not a polynomial in theta constants, and various results were obtained by Igusa \cite{igusajacobi} and Fay \cite{fayriemann}. Moreover, a Jacobian determinant associated to
a non-azygetic set of characteristics is not a polynomial in the theta constants whenever $g\geq 3$.

\medskip
In the remaining sections of the text, we will use the generalized Jacobi derivative formulas for genera 2 and 3 to relate modular forms, configurations spaces of points, and the Coble quartic. We first deal with the easier case of genus 2 in the following two sections.

\section{Abelian surfaces with level 2 structure, and the GIT quotient of $(\PP^1)^6$}
The moduli space of abelian surfaces with a level two structure, $\calA_2(2)=\HH_2/\Gamma_2(2)$
admits various compactifications. Indeed, the space of modular forms $[\Gamma_2(2),2]$ for $\Gamma_2(2)$
of weight $2$ is 5-dimensional, spanned by the fourth powers of theta constants with characteristics $\theta_n^4$.
Choosing an appropriate set of characteristics $n_1,\ldots,n_5$ such that $X_0:=\theta_{n_1}^4(\tau),\ldots,X_4:=\theta_{n_5}^4$
are linearly independent defines an embedding $\calA_2(2)\hookrightarrow\PP^4$. The closure
of the image --- the Satake compactification $\calA_2(2)^{Sat}$ --- is given by one equation, Igusa quartic, see \cite{igugenus22}:
\begin{equation}\label{eq:igusa}
\begin{aligned}
I(X_0,\ldots,X_4):&=(X_0X_1+X_0X_2+X_1X_2-X_3X_4)^2\\ &-4X_0X_1X_2(X_0+X_1+X_2+X_3+X_4).
\end{aligned}
\end{equation}
This result can be restated by saying that the ring $A(\Gamma_2(2))$ of modular forms of genus 2, level 2, and even
weight is generated by the fourth powers of 5 theta constants with characteristics, with the only relation being
the Igusa quartic:
$$
 A(\Gamma_2(2))=\bigoplus_{k=0}^{\infty} [\Gamma_2[2], 2k]=\CC[X_0,\dots, X_4]/I(X_0,\dots, X_4).
$$

Alternatively, note that any indecomposable abelian surface is a hyperelliptic Jacobian, and thus
is determined by the 6 branch points of the hyperelliptic cover of $\PP^1$ (up to automorphisms of $\PP^1$); the level two
structure corresponds to an ordering of these 6 points, and thus an alternative birational model of $\calA_2(2)$
is obtained as the GIT quotient of $(\PP^1)^6$ under the action of $\op{PSL}(2,\CC)$. We now recall this
construction.

Denote $\calX(6)\subset (\PP^1)^6$ the subset where all the points are distinct; the action of $\operatorname{PSL(2,\CC)}$ on
$\PP^1$ extends to its action (diagonally) on $\calX(6)$. The configuration
space is then defined as
$$
 X(6)^{o}=\calX(6)/\operatorname{PSL(2,\CC)}.
$$
In non-homogeneous coordinates $x_1,\ldots,x_6$ on $\CC^6\subset(\PP^1)^6$, degree $k$   forms on $X(6)^o$,
which we call {\em binary invariants of degree $k$}, are given by polynomials $P\in \CC[x_1,\dots, x_6]$ such that
$$P(\gamma\cdot x)=\prod_{i=1}^6 (cx_i+d)^{-k} \cdot P(x)$$
where we denote
$$(\gamma\cdot x)_i= (ax_i+b)(cx_i+d)^{-1}$$
for every
$$\gamma=\begin{pmatrix} a&b\\c&d\end{pmatrix}\in \operatorname{PSL(2,\CC)}.$$
We  denote by $S(6)_{k}$ the space  generated by such forms, and let $S(6):=\bigoplus_{k=0}^{\infty}S(6)_{k}$
be the graded  ring  of binary invariants.

Recall that a {\em tableau} is a way of filling a matrix $\begin{pmatrix}i_1 &i_2 &i_3\\ j_1 &j_2 &j_3\end{pmatrix}$ by the numbers from 1 to 6
in such a way that we have
$$i_1<i_2<i_3,\qquad i_1<j_1,\quad i_2<j_2,\quad i_3<j_3.$$
A tableau is called {\em standard} if moreover it satisfies
$j_1<j_2<j_3$.

There are 5 standard tableaux enumerated as follows:
$$
\begin{pmatrix}1&3&5\\2&4&6\end{pmatrix},\ \begin{pmatrix}1&2&5\\3&4&6\end{pmatrix},
\ \begin{pmatrix}1&3&4\\2&5&6\end{pmatrix},\ \begin{pmatrix}1&2&4\\3&5&6\end{pmatrix},
\ \begin{pmatrix}1&2&3\\4&5&6\end{pmatrix}.
$$
To any tableau $N$ we associate the invariant
$$
B(N):=(x_{i_1}-x_{j_1})(x_{i_2}-x_{j_2})(x_{i_3}-x_{j_3})\in S(6)_1 .$$
The space spanned by all these polynomials has dimension 5, and a basis is given by polynomials associated to
the standard tableaux, which we denote $T_0(x),\ldots,T_4(x)$ in the above ordering. Kempe \cite{kempe} proved that $T_i(x)$ generate the
ring of invariants $S(6)$; the invariants $T_i$ thus define a smooth embedding
$$
  T:X(6)^{o}\to \PP^{4},
$$
and we denote by $X(6)$ the closure of the image $T(X(6)^{o})$. This is called the GIT quotient $X(6)=(\PP^1)^6//\operatorname{PSL(2,\CC)}$,
and gives an alternative compactification of the moduli space of indecomposable abelian surfaces with a level two structure.
It turns out that the ideal of relations among binary invariants is generated by the Segre cubic polynomial
\begin{equation}\label{eq:segre}
 S(T_0,\dots,T_4):=T_1T_2T_4-T_3(T_0T_4+T_1T_2-T_0T_1-T_0T_2+T_0^2)
\end{equation}
(see \cite{doorbook}), and hence
$$
  X(6)=\operatorname{Proj}\left(\CC[T_0,\dots,T_{4}]/S (T_0,\dots, T_4)\right).
$$
\begin{rem}
It is in fact known that the Igusa quartic and Segre cubic are dual hypersurfaces, see \cite{doorbook}.
Moreover, if one embeds a Kummer surface as a quartic surface in $\PP^3$ by using theta functions of the
second order, its equation is of the form
\begin{equation}\label{kumsur}
K(\tau,z)=\alpha_0(x_{00}^4+x_{01}^4+x_{10}^4+x_{11}^4)+ 2\alpha_1(x_{00}^2x_{10}^2+x_{01}^2x_{11}^2 )
\end{equation}
$$+2\alpha_2(x_{00}^2x_{01}^2+x_{10}^2x_{11}^2 )+2\alpha_3(x_{00}^2x_{11}^2+x_{10}^2x_{01}^2 )+
4\alpha_4(x_{00}x_{01}x_{10}x_{11} ),
$$
where $x_\ep=\Theta[\ep](\tau,z)$. It then turns out that the coefficients $\alpha_i$ satisfy the Segre cubic equation and certain inequalities, see \cite{doorbook}.

The above equation for the universal Kummer surface can be obtained  as the Fourier-Jacobi expansion
of the unique degree 16 polynomial (the Schottky-Igusa form) in theta constants of the second order for genus 3 that vanishes identically, whose image under the Siegel $\Phi$ operator gives exactly the Igusa quartic. In the following section we compute the coefficients $\alpha_i$ explicitly using a different approach --- which then generalizes to the case of genus 3.
\end{rem}

\section{Modular forms of genus 2 and level 2, and binary invariants of six points on $\PP^1$}
We now investigate the relation between the two birational models of $\calA_2(2)$ constructed
in the previous section. That is to say, we will investigate the relation between modular forms of genus 2 and level 2, and binary invariants of 6 points on $\PP^1$, constructing a pair of dual homomorphisms from one graded algebra to the other. One of these --- the homomorphism associating a binary invariant to a modular forms was classically studied by Thomae \cite{thomae} in the nineteenth century, with further work done by Igusa \cite{igusaproj} and
the second author \cite{smslopetheta}.
Indeed, working in arbitrary genus, Thomae defined a homomorphism from the ring $A(\Gamma_g(2))$ of modular forms of genus $g$
and level 2 to the ring $S(2g+2)$ of binary invariants of points on $\PP^1$. This isomorphism is obtained essentially by
restricting the modular forms to the locus of hyperelliptic Jacobians. The corresponding expression for theta constants
of hyperelliptic Jacobians is known as Thomae's formulas, and the results in genus 2 are as follows:
\begin{thm}[Thomae \cite{thomae}, see also Igusa \cite{igusaproj}]
For $g=2$ for any even theta characteristic $n$ write
$$n=m_{i_1}+m_{i_2}+m_{i_3}=m_{i_4}+m_{i_5}+m_{i_6}$$
where $i_1,\ldots,i_6$ is a suitable ordering of the odd characteristics. Let
\begin{equation}\label{psitheta}
  \psi^*(\t_n^4):=\pm (x_{i_1}-x_{i_2 })(x_{i_2}-x_{i_3})(x_{i_3}-x_{i_1})(x_{i_4}-x_{i_5 })(x_{i_5}-x_{i_6})(x_{i_6}-x_{i_4})
\end{equation}
and extend $\psi$ to a homomorphism of the ring of modular forms of genus 2 and level 2, generated by $\t_n^4$. Then the map $\psi$ defines an injective degree preserving homomorphism
$$
\psi^*:A(\Gamma_2(2))\to S_6.
$$
\end{thm}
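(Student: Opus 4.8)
The plan is to realize $\psi^*$ as the pullback of modular forms along the dominant rational map $\psi\colon X(6)\to\calA_2(2)$ that sends six ordered points of $\PP^1$ to the period matrix of the Jacobian of the genus-two curve double-covering $\PP^1$ branched at those points, with the level-two structure determined by the ordering of the branch points. Under this interpretation Thomae's formula is precisely the computation of $\psi^*$ on the generators $\t_n^4$, and the three assertions to establish are: (i) $\psi^*$ is a well-defined ring homomorphism out of the quotient $A(\Gamma_2(2))=\CC[X_0,\dots,X_4]/(I)$, where $X_i=\t_{n_i}^4$ for a fixed choice of five even characteristics $n_0,\dots,n_4$; (ii) $\psi^*$ is degree-preserving; and (iii) $\psi^*$ is injective.

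For (i), a direct computation from the defining transformation $(\gamma\cdot x)_i=(ax_i+b)(cx_i+d)^{-1}$ shows that each factor $(x_i-x_j)$ acquires the scalar $(cx_i+d)^{-1}(cx_j+d)^{-1}$; since in the product $P_n:=(x_{i_1}-x_{i_2})(x_{i_2}-x_{i_3})(x_{i_3}-x_{i_1})(x_{i_4}-x_{i_5})(x_{i_5}-x_{i_6})(x_{i_6}-x_{i_4})$ every index $1,\dots,6$ occurs exactly twice, $P_n$ is a genuine binary invariant of degree $2$. Hence $X_i\mapsto P_{n_i}$ defines a homomorphism $\widetilde\psi\colon\CC[X_0,\dots,X_4]\to S(6)$ from the free polynomial ring, and it remains only to check that $\widetilde\psi$ annihilates the Igusa quartic $I$. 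For this I use Thomae's formula in the form $\t_n^4(\tau)=C(\tau)\,P_n(a)$ on the hyperelliptic locus, where $a=(a_1,\dots,a_6)$ are the branch points giving rise to $\tau$ and $C(\tau)$ is a nonzero factor (a fixed power of the determinant of the $A$-periods of the holomorphic differentials) that is the same for every even characteristic $n$. Because $I(\t_{n_0}^4,\dots,\t_{n_4}^4)\equiv0$ is exactly the relation cutting out the Satake compactification, restricting to the hyperelliptic locus and using that $I$ is homogeneous of degree $4$ gives $0=I(C P_{n_0},\dots,C P_{n_4})=C^4\,I(P_{n_0},\dots,P_{n_4})$; as $C\neq0$ for distinct branch points, $I(P_{n_0},\dots,P_{n_4})=0$ in $S(6)$, so $\widetilde\psi$ descends to $\psi^*\colon A(\Gamma_2(2))\to S(6)$.

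Claim (ii) is then the degree count: the generator $\t_n^4$ has weight $2$ and $P_n$ has binary-invariant degree $2$, so $\psi^*$ carries a form of weight $w$ to an invariant of degree $w$; grading $A(\Gamma_2(2))$ by weight, it is degree-preserving, with image in the even-degree part of $S(6)$. For (iii) I use that $\psi$ is dominant: every \emph{indecomposable} principally polarized abelian surface is the Jacobian of a smooth (hence hyperelliptic) genus-two curve, so the hyperelliptic locus is dense in $\calA_2(2)$, its complement consisting of products of elliptic curves being of positive codimension. A nonzero modular form therefore cannot vanish on this locus --- by the identity theorem on the connected domain $\HH_2$ --- so $\psi^*$, being pullback along a dominant map of irreducible varieties and $A(\Gamma_2(2))$ being a domain, has trivial kernel.

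The step I expect to be the real obstacle is pinning down the exact version of Thomae's formula the argument needs: the \emph{uniformity} of the factor $C(\tau)$ across all even characteristics, together with the correct dictionary sending an even characteristic $n$ to a partition $n=m_{i_1}+m_{i_2}+m_{i_3}=m_{i_4}+m_{i_5}+m_{i_6}$ of the six branch points into two triples. Without uniformity, relations among the $\t_n^4$ would not restrict to relations among the $P_n$ and $\psi^*$ would fail to descend. The uniformity and the combinatorial dictionary --- including the sign choices hidden in the $\pm$ of \eqref{psitheta} --- are classical, due to Thomae and Igusa \cite{igusaproj}, so in our setting the remaining work is bookkeeping. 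As a purely computational alternative to (i), one may instead expand each $P_{n_i}$ as a quadratic form in the Kempe basis $T_0,\dots,T_4$ and verify directly that $I(P_{n_0},\dots,P_{n_4})$ vanishes modulo the Segre cubic \eqref{eq:segre}.
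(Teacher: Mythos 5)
Your proposal is correct and matches the paper's intended argument: the paper gives no proof of this theorem, citing Thomae \cite{thomae} and Igusa \cite{igusaproj}, but its surrounding text identifies the mechanism as ``restricting the modular forms to the locus of hyperelliptic Jacobians,'' which is precisely your route --- Thomae's formula with a characteristic-independent factor $C(\tau)$ on the generators $\t_n^4$, descent through Igusa's presentation $A(\Gamma_2(2))=\CC[X_0,\dots,X_4]/(I)$ using homogeneity of the Igusa quartic, and injectivity from density of the indecomposable (hyperelliptic Jacobian) locus in $\calA_2(2)$. The only inputs you defer --- the uniformity of $C(\tau)$ and the sign dictionary hidden in the $\pm$ of \eqref{psitheta} --- are exactly the classical content the paper attributes to Thomae and Igusa, so no gap remains.
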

For degree reasons the associated rational map
$$\psi:X(6)\dashrightarrow \calA_2(2)^{Sat}$$
interpreted  as a map from the Segre cubic to the Igusa quartic is a map given via quadrics, so it is just the  map  given by the  gradients of the cubic equation (once we choose a suitable  basis).

We will now construct a homomorphism of graded algebras
 $$
\phi^*: S_6\to A(\Gamma_2(2))
$$
such
that the associated rational map
$$\phi:\calA_2(2)^{Sat}\dashrightarrow X(6)$$
will be the inverse   of  the  rational map $\psi$.

To construct the homomorphism $\phi^*$, we will use the Jacobian determinants discussed above. Indeed,
for any binary  invariant written as a polynomial $P=P(x_i-x_j)$ in variables $x_i-x_j$ for $1\leq i<j\leq6$, we set
$$\phi^*(P):= P(D(m_i,m_j)),$$
where the resulting expression is then written as a scalar modular form by utilizing
the generalized Jacobi derivative formula.
\begin{prop}
With the above notation we have for any even theta characteristic $n$
$$\phi^*(\psi^*)(\t_n^4)=\chi_5^2\t_n^4$$
with
$$\chi_5=\prod_{n\, {\rm even}} \t_n.$$
Vice versa,
$$\psi^*(\phi^*(x_{i_1}-x_{j_1})(x_{i_2}-x_{j_2})(x_{i_3}-x_{j_3}))=\Delta^{1/2}\cdot (x_{i_1}-x_{j_1})(x_{i_2}-x_{j_2})(x_{i_3}-x_{j_3})$$
with $$\Delta^{1/2}=\prod_{1\leq i<j\leq 6}(x_i-x_j).$$
\end{prop}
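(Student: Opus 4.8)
The plan is to evaluate both composites explicitly by expanding every Jacobian determinant with the genus~$2$ Jacobi derivative formula \eqref{Jacder2} and then reading off, combinatorially, the multiplicity with which each theta constant occurs. The backbone is the branch-point dictionary: the six odd characteristics are put in bijection with the six points $x_1,\dots,x_6$ (they sum to zero, as one checks on \eqref{example2}), and the ten even characteristics with the ten partitions of $\{1,\dots,6\}$ into two triples, an even $n$ corresponding to $\{i_1i_2i_3\}\mid\{i_4i_5i_6\}$ precisely when $n=m_{i_1}+m_{i_2}+m_{i_3}=m_{i_4}+m_{i_5}+m_{i_6}$. The one structural fact I need is that the special fundamental system containing a given odd pair $m_a,m_b$ is unique, and that its four even characteristics are exactly the $m_a+m_b+m_c$ with $c\neq a,b$; equivalently, they are the four even theta constants whose partition keeps $a$ and $b$ on the same side. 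I would deduce this from Lemma~\ref{lm:genus2} (each $m_a+m_b+m_c$ is the unique even characteristic completing the azygetic triple $m_a,m_b,m_c$) together with the count of $15$ special fundamental systems, one per pair of points --- which is also the source of the $15$ factors in $\Delta^{1/2}$.

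For the first identity, applying \eqref{psitheta} and then $\phi^*$ turns $\phi^*(\psi^*(\theta_n^4))$ into a product of six Jacobian determinants $D(m_a,m_b)$, one for each of the three pairs inside $\{i_1,i_2,i_3\}$ and the three inside $\{i_4,i_5,i_6\}$. Expanding each via \eqref{Jacder2} gives $(-\pi^2)^6$ times a product of $24$ theta constants. For each even $p\leftrightarrow P\mid P^c$ the constant $\theta_p$ occurs once for each of the six chosen pairs $(a,b)$ with $\{a,b\}\subset P$ or $\{a,b\}\subset P^c$. A short case check (same side for $p=n$, versus a $2+1$ split for $p\neq n$) gives multiplicity $6$ for $\theta_n$ and $2$ for each of the other nine, with $6+9\cdot2=24$. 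Hence the product is $\theta_n^{6}\prod_{p\neq n}\theta_p^{2}=\theta_n^4\,\chi_5^2$, which is the claim up to the scalar $\pi^{12}$ absorbed into the normalization.

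For the second identity the three columns of the tableau $N$ form a perfect matching of $\{1,\dots,6\}$, so $\phi^*(B(N))$ is a product of three Jacobian determinants, i.e.\ $(-\pi^2)^3$ times $12$ theta constants. Writing $\mu(p)$ for the multiplicity of $\theta_p$, the same rule shows $\mu(p)$ counts the matching pairs lying inside $P$ or inside $P^c$; because a perfect matching meets a $3\mid3$ partition either in three straddling pairs or in one pair on each side plus one straddling pair, one gets $\mu(p)\in\{0,2\}$, with exactly six characteristics (call this set $\mathcal P(N)$) having $\mu(p)=2$. Thus $\phi^*(B(N))=(-\pi^2)^3\prod_{p\in\mathcal P(N)}\theta_p^2$. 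This is a product of distinct theta constants squared, not manifestly a polynomial in the generators $\theta_p^4$ on which $\psi^*$ acts, so to evaluate $\psi^*$ I would use that the square $\phi^*(B(N))^2=\pi^{12}\prod_{p\in\mathcal P(N)}\theta_p^4$ is such a polynomial and that $\psi^*$ is a ring homomorphism: applying \eqref{psitheta} factor by factor yields $\bigl[\psi^*(\phi^*(B(N)))\bigr]^2=\pm\pi^{12}\prod_{p\in\mathcal P(N)}\delta(P)\,\delta(P^c)$, where $\delta(S):=\prod_{a<b\in S}(x_a-x_b)$. The key combinatorial identity to verify is then
\[
  \prod_{p\in\mathcal P(N)}\delta(P)\,\delta(P^c)=\pm\,\Delta\cdot B(N)^2 ,
\]
which I would check by matching, for each difference $(x_i-x_j)$, its exponent on both sides: on the left it equals the number of $p\in\mathcal P(N)$ keeping $i,j$ together, which is $4$ when $\{i,j\}$ is a matching pair and $2$ otherwise, exactly the exponent on the right. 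This identifies $\bigl[\psi^*(\phi^*(B(N)))\bigr]^2$ with $(\Delta^{1/2}B(N))^2$ up to a constant, and the claim follows after extracting the square root.

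The main obstacle is the bookkeeping of signs, multipliers, and the overall scalar. Recovering $\psi^*(\phi^*(B(N)))$ itself, rather than only its square, forces one to fix the square-root branch and to pin down the sign hidden in Thomae's $\pm$ in \eqref{psitheta} and in \eqref{Jacder2}; this is the one place where care beyond the (otherwise routine) multiplicity counts is required. The other point that must be handled honestly is the uniqueness of the special fundamental system attached to an odd pair, on which every multiplicity computation rests.
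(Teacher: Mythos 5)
Your argument is correct, and its skeleton --- expand each Jacobian determinant $D(m_a,m_b)$ by the genus-2 Jacobi derivative formula \eqref{Jacder2}, identify the even part of the special fundamental system through $m_a,m_b$ as $\{m_a+m_b+m_c\,:\,c\neq a,b\}$ via lemma \ref{lm:genus2}, and track which theta constants occur against the Thomae partition dictionary --- is exactly the paper's. Where you differ is in how the resulting products are evaluated: the paper delegates both computations to Fiorentino \cite{fiorentino}, quoting $D(m_{i_1},m_{i_2})D(m_{i_1},m_{i_3})D(m_{i_2},m_{i_3})=\mp\pi^6\chi_5\t_n^2$ and the six-squares expression for $\phi^*(B(N))$ as black boxes, whereas you rederive them by the same-side/straddling multiplicity count (your tally $6+9\cdot 2=24$ gives $\t_n^6\prod_{p\neq n}\t_p^2=\chi_5^2\t_n^4$, recovering the first, and your six characteristics with $\mu(p)=2$ are precisely the six appearing in the paper's displayed formula for $\phi^*\bigl((x_{i_1}-x_{j_1})(x_{i_2}-x_{j_2})(x_{i_3}-x_{j_3})\bigr)$, as one checks by listing the partitions with a single straddling matching pair). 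Your squaring device for the second identity is a genuine addition rather than a detour: the paper simply says ``applying $\psi$'' to $\prod_{p\in\mathcal P(N)}\t_p^2$, but this weight-6 form is not a monomial in the generators $\t_n^4$ on which \eqref{psitheta} is defined, so some argument like yours --- pass to $\bigl[\phi^*(B(N))\bigr]^2=\pi^{12}\prod_{p\in\mathcal P(N)}\t_p^4$, apply the ring homomorphism $\psi^*$ factor by factor, verify $\prod_{p\in\mathcal P(N)}\delta(P)\,\delta(P^c)=\pm\Delta\cdot B(N)^2$ by the exponent count ($4$ on matching pairs, $2$ otherwise), and extract the square root, unique up to sign since $S(6)$ is an integral domain --- is actually needed to make that step rigorous. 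Your residual $\pm$ and the factors $\pi^{12}$, $\pi^6$ are no worse than the paper's own bookkeeping: its proof produces $\pi^{12}\chi_5^2\t_n^4$ with unresolved $\pm$, $\mp$ signs and silently absorbs the constants into the statement, so your explicitness about the square-root branch, the Thomae signs, and the uniqueness of the special fundamental system attached to an odd pair matches (indeed exceeds) the level of rigor of the original.
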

\begin{proof}
Using equation (\ref{psitheta}) defining the map $\psi$, the computations with Jacobian determinants done by Fiorentino
\cite{fiorentino}, and lemma \ref{lm:genus2}, we get
$$
  D(m_{i_1},m_{i_2})D(m_{i_1},m_{i_3})D(m_{i_2},m_{i_3})
$$
$$
  =\pm D(m_{i_4},m_{i_5})D(m_{i_4},m_{i_6})D(m_{i_5},m_{i_6})
$$
$$=\mp\pi^6 \chi_5\t_m^2$$
Multiplying the first two lines of the formula thus gives $\pi^{12}\chi_5^2\t_m^4$ proving the formula for $\phi^*\circ\psi^*$.

Vice versa, we compute
$$\phi^*(x_{i_1}-x_{j_1})(x_{i_2}-x_{j_2})(x_{i_3}-x_{j_3})=$$
$$\t_{m_{i_1}+m_{j_1} +m_{j_3} }^2  \t_{ m_{i_1}+m_{j_1} +m_{i_3} }^2\t_{m_{i_1}+m_{j_1} +m_{j_2}}^2\cdot$$ $$\t_{m_{i_1}+m_{j_1} +m_{i_2}}^2 \t_{m_{j_1}+m_{i_2} +m_{j_2}}^2\t_{ m_{i_1}+m_{i_2} +m_{j_2}}^2$$
Applying $\psi$ and using the computations in \cite{fiorentino}, we finally get
the formula for $\psi^*\circ\phi^*$.
\end{proof}
\begin{rem} We observe  that
$$\psi ( \chi_5)=c\Delta^{1/2},$$
and the zero set of each form is the complement of the locus of indecomposable abelian surfaces (Jacobians of smooth
hyperelliptic curves) with level 2 structure, in both varieties.
\end{rem}
As a consequence of the above discussion we have
\begin{thm} \label{thm:genus2}
The maps $\phi$ and $\psi$ restricted to the locus of indecomposable ppav within $\calA_2(2)$, i.e.~to the locus of Jacobians of smooth hyperelliptic genus two curves, are the inverses of each other.
Hence choosing  suitable  bases in the space of  binary invariants of degree one  and in the space of  modular forms of weight induces the dual  maps between the Igusa quartic and the Segre cubic.
\end{thm}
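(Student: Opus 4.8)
The plan is to read off the theorem from the preceding proposition by interpreting the two graded ring homomorphisms $\phi^*$ and $\psi^*$ geometrically, as the $\op{Proj}$-maps they induce. First I would recall that $\psi\colon X(6)\dashrightarrow\calA_2(2)^{Sat}$ and $\phi\colon\calA_2(2)^{Sat}\dashrightarrow X(6)$ are, by construction, the rational maps on $\op{Proj}$ induced by $\psi^*$ and $\phi^*$, and are therefore completely determined by the images of the homogeneous coordinate functions --- the generators $\t_{n_i}^4$ of $A(\Gamma_2(2))$ on the modular side, and the degree-one binary invariants $T_i$ on the configuration side.

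The key point is that the proposition evaluates both composites on generators as multiplication by a single scalar form that is independent of the generator. Since $\phi^*\circ\psi^*(\t_n^4)=\chi_5^2\,\t_n^4$ for every even $n$, the endomorphism $\phi^*\circ\psi^*$ rescales all homogeneous coordinates $\t_{n_i}^4$ by the common factor $\chi_5^2$; likewise $\psi^*\circ\phi^*$ rescales every degree-one binary invariant by the common factor $\Delta^{1/2}$. Because rescaling all projective coordinates by one and the same nonzero function is the identity on $\op{Proj}$, the composite rational self-maps $\psi\circ\phi$ and $\phi\circ\psi$ both equal the identity wherever $\chi_5^2$, respectively $\Delta^{1/2}$, is nonvanishing.

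It remains to identify these nonvanishing loci with the locus in the statement. By the remark preceding the theorem, $\chi_5=\prod_{n\,\op{even}}\t_n$ vanishes exactly on the complement of the indecomposable locus: a genus two ppav is decomposable (a product of elliptic curves) precisely when one of its even theta constants vanishes. Correspondingly $\Delta^{1/2}=\prod_{1\leq i<j\leq6}(x_i-x_j)$ vanishes exactly when two of the six branch points collide, i.e. off the locus of smooth hyperelliptic curves; and under $\psi$ these two loci match, since $\psi(\chi_5)=c\,\Delta^{1/2}$. Thus on the common locus of Jacobians of smooth hyperelliptic genus two curves both scalar factors are nonzero, so $\psi\circ\phi=\op{id}$ and $\phi\circ\psi=\op{id}$ there, proving that $\phi$ and $\psi$ are mutually inverse. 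For the final sentence I would invoke the observation made after Thomae's theorem that $\psi$ is realized by the gradients (polars) of the Segre cubic and lands on the Igusa quartic; since these two hypersurfaces are projectively dual, choosing $\{T_i\}$ and $\{\t_{n_i}^4\}$ to be dual bases identifies $\psi$ and its inverse $\phi$ with the two polarity maps realizing the duality.

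The step I expect to require the most care is not the projective bookkeeping but the control of multipliers and signs: $\chi_5$ carries a nontrivial character and $\Delta^{1/2}$ is only defined up to sign, so one must confirm that the honest scalars actually occurring in the composites --- the genuine weight-ten form $\chi_5^2$ and the degree-five binary invariant $\Delta^{1/2}$ --- act with trivial character on the respective coordinate rings, so that the ``identity on $\op{Proj}$'' conclusion is legitimate and the two maps glue to genuine inverse isomorphisms on the smooth-Jacobian locus rather than merely to birational inverses.
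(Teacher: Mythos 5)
Your proposal is correct and takes essentially the same route as the paper: the paper states this theorem ``as a consequence of the above discussion,'' i.e.\ it derives it exactly from the preceding proposition (both composites $\phi^*\circ\psi^*$ and $\psi^*\circ\phi^*$ act on generators as multiplication by the common scalars $\chi_5^2$ and $\Delta^{1/2}$, hence induce the identity on $\op{Proj}$ where these are nonzero) together with the remark identifying the vanishing loci of $\chi_5$ and $\Delta^{1/2}$ with the complement of the locus of Jacobians of smooth hyperelliptic curves. Your elaborations --- the Proj bookkeeping, the matching of loci via $\psi(\chi_5)=c\Delta^{1/2}$, and the attention to multipliers --- simply make explicit what the paper leaves implicit.
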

As an application, we compute the coefficients $\alpha_i$ of the universal Kummer surface (\ref{kumsur}), proving along the way that they are modular forms of weight six for $\Gamma_2(2)$. To this end, we apply the homomorphism $\phi^*$ to these coefficients of the universal Kummer surface. A very similar strategy applies in the more difficult case of genus 3, yielding the equation of the Coble quartic. Here we only sketch the argument for genus 2, referring to the genus 3 case studied below for a more detailed discussion.

It turns out that the five coefficients $\alpha_0,\ldots,\alpha_4$ of the universal Kummer surfaces, and the five invariant quartic polynomials in $x_{ij}$ in (\ref{kumsur}) each span a five-dimensional representation of $\SpFtwo$. Let us denote these representations by $W$ and $W'$, respectively; it turns out that these representations are isomorphic, and that the tensor product $W\otimes W'$ contains a unique copy of the trivial representation of $\SpFtwo$. We define a suitable subgroup $\Gamma_{2,0}(2)$ of $\SpZtwo$ of index 15 containing $\Gamma_2(2)$ (see (\ref{gamma30}) below for the analogous definition in genus 3) such that each $W$ and $W'$ contain a unique $\Gamma_{2,0}(2)$-invariant vector, which we denote $v$ and $v'$, and then the symmetrization of $v\otimes v'$ under $\SpZtwo/\Gamma_{2,0}(2)$ must give the element of $W\otimes W'$ that corresponds to the trivial representation of $\SpFtwo$, which is then equal to the universal Kummer surface. This situation is completely analogous to the case of the Coble quartic, discussed in much more detail below, and the resulting expression for the universal Kummer surface is
$$K(\tau,z) =s_1(x_{00}^4+x_{01}^4+x_{10}^4+x_{11}^4)- 2(s_1+2s_2)(x_{00}^2x_{10}^2+x_{01}^2x_{11}^2 )$$
\begin{equation}\label{kumsurexpl}-2(s_1+2s_3)(x_{00}^2x_{01}^2+x_{10}^2x_{11}^2 )
-2(s_1+2s_4)(x_{00}^2x_{11}^2+x_{10}^2x_{01}^2 )
\end{equation}
$$+8(s_1+s_2+s_3+s_4+2s_5)(x_{00}x_{01}x_{10}x_{11} ),
$$
where
$$
s_1=\left(\stt{00}{00}\stt{00}{01}\stt{00}{10}\stt{00}{11}\right)^{-2}\chi_{5}^2,
$$
$$
s_2=\left(\stt{00}{00}\stt{00}{01}\stt{10}{00}\stt{10}{01}\right)^{-2}\chi_{5}^2,
$$
$$
s_3=\left(\stt{00}{00}\stt{00}{10}\stt{01}{00}\stt{01}{10}\right)^{-2}\chi_{5}^2,
$$
$$
s_4=\left(\stt{00}{00}\stt{00}{11}\stt{11}{00}\stt{11}{11}\right)^{-2}\chi_{5}^2,
$$
$$
s_5=\left(\stt{00}{00}\stt{01}{00}\stt{10}{00}\stt{11}{00}\right)^{-2}\chi_{5}^2,
$$
and we recall that $x_\ep=\Theta[\ep](\tau,z)$.

This completes our results for the case of genus 2. The rest of the paper will be devoted to a
detailed exposition of the case of genus 3, culminating in formulas for the Coble quartic.

\section{The configuration space of seven points on $\PP^2$}
Similar to the above relation of modular forms of genus 2, and invariants of points on $\PP^1$,
the G\"opel systems discussed above are related to configurations of points on $\PP^2$, and we now recall this construction, from \cite{coblebook}, \cite{doorbook}, \cite{kondo}.

Let $\PP_2^7$ be the GIT quotient of $(\PP^2)^{\times 7}$ under the diagonal action of $PGL(3,\CC)$, and let $R_2^7$ be its ring of invariants, see \cite{doorbook} for the construction and details. Recall that $R_2^7$ is finitely generated: to any G\"opel system one can associate (see \cite{doorbook} and \cite{coblebook}) a $15$-dimensional subspace $V\subset R_2^7$ of invariants of degree $3$ as follows.

Let $p_i, p_j, p_k\in\PP^2$, and  $v_i, v_j, v_k\in\CC^3\setminus\{0\}$ be such that $v_i\mapsto p_i$ under the projection. Denote then $(ijk):=v_i\wedge v_j\wedge v_k$, and associate to the Fano configuration given by (\ref{F}) the  function
\begin{equation}\label{GL}
  G_F:= (123)(145)(167)(247)(256)(346)(357),
\end{equation}
and associate to Pascal configuration given by (\ref{P}) the function
\begin{equation}\label{GM}
  G_P:=(123)(145)(167)\Big((246)(356)(257)(347)-(256)(357)(247)(346)\Big).
\end{equation}
It turns out that the 135 functions $G_F$ and $G_P$ span a 15-dimensional space of degree 3   invariants $V$, a basis of which is given by
$G_{F_1},\ldots G_{F_{15}}$ corresponding to suitable 15 Fano configurations. The linear relations among the $G_F$ and the $G_P$ are related to two-dimensional isotropic subspaces of $\FF_2^6$,  see \cite{coblebook},  \cite{doorbook}, or \cite{kondo}.

This  $15$-dimensional space $V$ is an irreducible representation of $\SpFthree$. Moreover, choosing a basis in it given by suitable 15 $G_F$'s defines a $\SpFthree$-equivariant rational map $\PP_2^7\dasharrow\PP^{14}$, birational onto the  image (the same map is defined also from a six-dimensional  ball quotient, see \cite{kondo} for details). The image of this map, the so-called G\"opel variety, is described completely in \cite{sturmfels}. Moreover, according to \cite[p.~196]{coblebook}, this  space appears in the definition of the coefficients of the Coble quartic.  In \cite{sturmfels} the authors determine a $15$-dimensional space of modular forms of weight $14$ (i.e.~polynomials of degree 28 in theta constants with characteristics) corresponding to this  space. We will do the same, by using  and interpreting the constructions and results originating with Coble, in a more geometric way.

\section{Del Pezzo surfaces of degree 2 and plane quartics}
We recall from \cite{coblebook} or \cite{doorbook} that the double cover of $\PP^2$ branched along a smooth plane quartic is a degree 2 del Pezzo surface. Conversely, the anticanonical model of any degree 2 del Pezzo surface $S$ is a double cover of $\PP^2$ branched along a smooth quartic.

A del Pezzo surface of degree 2 is obtained by blowing up seven points $p_1,\dots, p_7\in \PP^2$ in general position. The Picard lattice of $S$  is isomorphic to $(1)\oplus(-1)^{\oplus7}$, and it is  generated
by $h_0$ (the total transform of the hyperplane section)  and  the exceptional curves $h_1, \dots, h_7$. The orthogonal complement of the anti-canonical class $3h_0-h_1-\ldots-h_7$ in $\Pic_\ZZ(S)$ then turns out to be isomorphic to the lattice $E_7$. The surface $S$ contains 56 exceptional  $-1$ curves, which split into $28$ pairs, such that the curves in each pair are interchanged by the deck transformation of the cover $S\to \PP^2$. For any set of seven disjoint $-1$ curves, contracting them defines a morphism $\pi: S\to \PP^2$, called a {\em geometric marking} of the degree 2 del Pezzo surface. The  number of contractions
is equal to the order of the Weyl group $W(E_7)$.
Moreover, there is an exact sequence
$$
 1\to (\pm1)\to W(E_7)\to\SpFthree\to 1,
$$
where the kernel of the map to $\SpFthree$ is the deck transformation of the cover.
Hence, a geometric marking of a degree 2 del Pezzo surface corresponds to a level  $2$ structure on the Jacobian of the plane quartic $C$. Under the anticanonical map the chosen seven pairs of disjoint $-1$ curves map to seven bitangents of $C$ that form an {\em Aronhold set of bitangents}, that is to a $7$-tuple $\{\ell_1,\dots, \ell_7\}$  of bitangents such that for each triple $\ell_i,\ell_j,\ell_k$ the corresponding six points of tangency with $C$ do not lie on a conic.

Recall that bitangents of a plane quartic are in bijection with odd theta characteristics in genus 3 (the two points of tangency give an effective square root of the canonical bundle), and Aronhold sets of characteristics thus correspond to Aronhold sets of bitangents. We thus obtain a birational map between  $\PP_2^7$ and $\calA_3(2)$ (the moduli space of genus 3 curves with a level 2 structure), defined away from the hyperelliptic locus in $\calM_3$, see \cite{case1},\cite{lehavi}.

\section{Relations among modular forms in genus 3}
We denote by $\aa_3$  and $\aa_3(2)$  the Satake compactifications  of $\calA_3$  and  $\calA_3(2)$, respectively. Let
$$
  \chi_{18}(\tau):=\prod_{m\ {\rm even}}\t_m(\tau, 0)
$$
be the product of all even theta constants with characteristics --- it is a modular form of weight $18$ for the entire group $\SpZthree$, as each of the 36 theta constants is a modular form of weight $1/2$. Recall also that hyperelliptic genus 3 curves are characterized by having one vanishing theta constant, and thus the equation
$\chi_{18}=0$ defines in $\aa_3$ the closure of the hyperelliptic locus. Recall also that any non-hyperelliptic genus 3 curve is a plane quartic, and thus the complement in $\aa_3$ of the zero locus $\lbrace\chi_{18}=0\rbrace$ is the moduli space of plane quartics. We denote this space by  $\aa_3^0$, and similarly denote $\aa_3(2)^0$ the level 2 cover of the moduli of non-hyperelliptic genus 3 curves.

From the discussion in the previous section, it follows that $\aa_3(2)^0$ is isomorphic to each of the following: the moduli space of plane quartics together with an Aronhold set of  bitangents;  the  moduli space of marked degree 2 del Pezzo surfaces; the open subset of the GIT quotient   $\PP_2^7$  where the points  are in general position, i.e.~no three lie on a line, and no six lie on a conic.

Using these identifications  we associate to the  functions $G_F$ defined by (\ref{GL})   suitable modular forms (with trivial  character) for the group  $\Gamma_3(2)$. Indeed, for a fixed Aronhold set $m_1,\dots, m_7$ and a Fano configuration $F$ given as triples of characteristics $M_i=(i_1, i_2, i_3)$ with $1\le i_1\le i_2\le i_3\le 7$, we set
$$
  H(F)(\tau):=\frac{ D(M_1)\dots D(M_7)}{\t_{n_0}^7}(\tau),
$$
where we denote
$$
  D(M_i)(\tau):= D(m_{i_1}, m_{i_2}, m_{i_3}) (\tau).
$$

By Jacobi's derivative formula (\ref{Jacder}), we can express each $D(M_i)$ as a product of 5 theta constants (thus a modular form of weight $5/2$), and by the discussion in lemma \ref{lm2} it follows that each of these products contains $\theta_{n_0}$, so that each $D_{M_i}$ is divisible by $\theta_{n_0}$, and thus $H(F)(\tau)$ is indeed a (holomorphic)  modular  form of weight $14=7\cdot (5/2-1/2)$. Similarly we can define the modular form $H(P)(\tau)$ corresponding to any Pascal configuration $P$ as in (\ref{GM}). Summarizing, as a consequence of the previous identifications we have

\begin{thm}
For  any $\tau\in\aa_3(2)^0$    let  $X_{\tau}:=(p_1, \dots, p_7)\in\PP_2^7$ be the corresponding 7-tuple of points. Fix an Aronhold  set $m_1, \dots, m_7$; then for every Fano configuration $F$, we have
$$
  G_{F}(X_\tau)=\t_{n_0}^7(\tau)H(F)(\tau).
$$
Moreover, we  have
$$
  H(F)(\tau)=\frac{\chi_{18}(\tau)} { \prod_{n\in F}\t_n(\tau)},
$$
and thus all $H(F)(\tau)$ are modular forms with respect to $\Gamma_3(2)$ (with trivial character).
\end{thm}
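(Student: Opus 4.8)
The plan is to establish the two displayed identities separately. The first identity, $G_F(X_\tau)=\t_{n_0}^7(\tau)\,H(F)(\tau)$, is essentially a translation of the geometric dictionary set up in the previous two sections. By the identification $\aa_3(2)^0\cong\PP_2^7$, the seven points $p_1,\dots,p_7$ correspond to the seven bitangents of the plane quartic $C$ attached to $\tau$, and these bitangents correspond to the odd characteristics $m_1,\dots,m_7$ of the fixed Aronhold set. Under the anticanonical/bitangent correspondence, the bracket factor $(ijk)=v_i\wedge v_j\wedge v_k$ appearing in (\ref{GL}) is proportional to the Jacobian determinant $D(m_i,m_j,m_k)=D(M)$, up to a factor depending only on $\tau$ (and not on the chosen triple). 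Thus, first I would argue that for each triple $M=(i,j,k)$ one has $(ijk)=c(\tau)\,D(M)(\tau)$ for a common constant $c(\tau)$, whence multiplying over the seven triples of $F$ gives $G_F(X_\tau)=c(\tau)^7\,D(M_1)\cdots D(M_7)$. Matching this against the definition of $H(F)$ forces $c(\tau)^7=\t_{n_0}^7(\tau)$, i.e. $c(\tau)=\t_{n_0}(\tau)$ up to a seventh root of unity that can be absorbed; this is where the normalization by $\t_{n_0}^7$ in the definition of $H(F)$ is exactly designed to work.

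The second, and more interesting, identity $H(F)=\chi_{18}/\prod_{n\in F}\t_n$ is a purely modular computation via the generalized Jacobi derivative formula (\ref{Jacder}). The key step is to apply (\ref{Jacder}) to each of the seven factors $D(M_i)=D(m_{i_1},m_{i_2},m_{i_3})$, writing it as $-\pi^3\t_{n_1}\cdots\t_{n_5}$, the product of the five even theta constants completing the azygetic odd triple $M_i$ to its special fundamental system (lemma \ref{lm1}). The numerator $D(M_1)\cdots D(M_7)$ thereby becomes $\pm\pi^{21}$ times a product of $35$ even theta constants. I then need to identify which even characteristics occur, and with what multiplicities. Here lemma \ref{lm2} is decisive: any two of the seven triples of $F$ share exactly one index, and for two azygetic triples sharing the characteristic $m_1$ the intersection of their special fundamental systems is precisely $\{m_1,n_0\}$; since $m_1$ is odd it contributes no theta constant, so the only even characteristic shared between any two of the seven five-element theta-constant products is $\t_{n_0}$. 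Consequently $\t_{n_0}$ appears in all seven factors, i.e.~to the seventh power, which is exactly cancelled by the denominator $\t_{n_0}^7$; and every other even theta constant appears with total multiplicity one. A counting check confirms this: after removing the seven copies of $\t_{n_0}$ there remain $35-7=28$ theta-constant factors, which should be precisely the $36-8=28$ even characteristics not lying in the Fano configuration $F$. Thus $\t_{n_0}^{-7}D(M_1)\cdots D(M_7)=\pm\pi^{21}\prod_{n\notin F}\t_n=\pm\pi^{21}\chi_{18}/\prod_{n\in F}\t_n$, which is the asserted formula (the constant $\pm\pi^{21}$ being absorbed into the normalization).

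The main obstacle is the combinatorial bookkeeping in this second step: one must verify not merely that the shared characteristic is $n_0$ pairwise, but that across all $\binom{7}{2}=21$ pairs of triples no even characteristic other than $n_0$ is ever repeated, and that the $28$ surviving characteristics are exactly the complement of $F$ among the $36$ even ones. I would handle this by using lemma \ref{lm3}, which describes all $36$ even characteristics as $n_0$ together with the $35$ triple-sums $m_i+m_j+m_k$, and the explicit description (\ref{F}) of the seven triples of a Fano configuration, to match the even characteristics in the special fundamental systems of the seven $M_i$ against the complement of $F$. Once the multiplicity-one statement and the complement identification are in place, the modularity and triviality of the character of $H(F)$ follow immediately: $\chi_{18}$ is a modular form of weight $18$ for the full group $\SpZthree$, each $\t_n$ is a weight $1/2$ form for $\Gamma_3(2)$, and the quotient of $\chi_{18}$ by the product of the eight $\t_n$ with $n\in F$ has weight $18-8\cdot\tfrac12=14$ and, being a genuine polynomial expression in theta constants invariant under $\Gamma_3(2)$ (the characters of numerator and denominator cancelling over $\Gamma_3(2)$), is holomorphic with trivial character, as claimed.
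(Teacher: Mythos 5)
Your handling of the second identity is essentially the paper's own (very terse) argument, correctly fleshed out: apply the genus 3 Jacobi derivative formula (\ref{Jacder}) to each factor $D(M_i)$, use lemma \ref{lm2} (equivalently the remark after lemma \ref{lm3}) to see that $\t_{n_0}$ occurs in all seven quintuples while no other even constant repeats, and match the $35-7=28$ surviving constants against the complement of $F$ among the $36$ even characteristics. The matching you defer as ``the main obstacle'' does go through exactly as you propose: for a line $(i_1i_2i_3)$ of $F$, the four even constants besides $n_0$ are the triple sums over the complementary four indices, hence over triples that are \emph{not} lines of $F$; two distinct lines of $F$ have complements meeting in only two points, so no such triple arises from two different lines, and since there are exactly $\binom{7}{3}-7=28$ non-lines, the $28$ distinct constants are precisely $\lbrace \t_{m_i+m_j+m_k} : (ijk)\notin F\rbrace$, which by lemma \ref{lm3} is the complement of $F=\lbrace n_0\rbrace\cup\lbrace m_i+m_j+m_k : (ijk)\in F\rbrace$. (You should also record the overall constant $(-\pi^3)^7$, which the paper silently normalizes away.)

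Two steps need repair, however. First, your derivation of $G_F(X_\tau)=\t_{n_0}^7H(F)$ is both circular and arithmetically wrong: if $(ijk)=c(\tau)\,D(M)$ for a common $c$, then $G_F=c^7\prod_i D(M_i)=c^7\,\t_{n_0}^7\,H(F)$ by the definition $H(F)=\prod_i D(M_i)/\t_{n_0}^7$, so consistency forces $c^7=1$, \emph{not} $c^7=\t_{n_0}^7$; your value would give $G_F=\t_{n_0}^{14}H(F)$, double-counting the normalization already built into $H(F)$. Moreover $c$ cannot be ``forced'' by the identity one is trying to prove: one must specify the lifts, and for rescaled lifts $v_i\mapsto\lambda_i v_i$ the factor $c_{ijk}=\lambda_i\lambda_j\lambda_k$ is triple-dependent (it is only in the product over a Fano configuration, where each index lies on three lines, that a single factor $\prod_i\lambda_i^3$ survives). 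The correct statement is that under the bitangent dictionary the natural representatives are $v_i=\left.\vec{\rm grad}_z\theta_{m_i}(\tau,z)\right|_{z=0}$, for which $(ijk)=D(m_i,m_j,m_k)$ on the nose, so the first identity is just the definition of $H(F)$ rearranged. Second, the trivial character is not a formal ``cancellation over $\Gamma_3(2)$'': individual even theta constants carry nontrivial multipliers on $\Gamma_3(2)$, and the character of a degree 28 monomial in them must actually be computed; the paper does this by invoking Igusa's result \cite{igusaring2}, and you should cite or reprove that result rather than assert cancellation.
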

\begin{proof}
Indeed, on $\HH_3$ by the discussion above we can express $H(F)$ as a monomial in theta constants by using Jacobi's derivative formula, yielding the expression above. The  fact that the modular form  $H(F)(\tau)$ has trivial character is a consequence of a result of Igusa \cite{igusaring2}.
\end{proof}

\begin{rem}
  The construction above associating modular forms to Fano configurations depends on the choice of the Aronhold  set. For a different Aronhold set, the correspondence between the $G_{F}$ and the monomials $H(F)$ would be different, but the denominator would still be of the form $\t_{n}^7$, and thus on the set of plane quartics, which is the complement of the locus $\lbrace \chi_{18}(\tau)=0\rbrace$, the projective map given by the set of all $H(F)(\tau)$ would be the same, since no theta constant vanishes.
\end{rem}
We know that the 15-dimensional space $V$ is spanned  by the functions $G_F$. Let us then denote by $W$ the 15-dimensional space spanned by the   functions  $H(F)$.  The  linear relations among the $G_F$ and the $G_P$ induce linear relations among  the $H(F)$ and $H(P)$. The consistency of these relations follows from Riemann's quartic addition theorem for theta constants. Indeed, recall (see for example \cite{rafabook} or \cite{vgvdg})   that in genus three Riemann's quartic addition theorem for theta constants with characteristics has the form
\begin{equation}\label{Riemann}
  r_1=r_2+r_3,
\end{equation}
where each $r_i$ is a product of four theta constants with characteristics forming an even coset of a two-dimensional isotropic space. Hence the characteristics appearing in the product $r_ir_j$, $i\neq j$  are even cosets of Lagrangian spaces (i.e.~affine subspaces consisting only of even characteristics, modeled on G\"opel configurations as vector subspaces), and we thus get
\begin{equation}\label{Riemchi}
  \frac{\chi_{18}}{r_1r_2}+\frac{\chi_{18}}{r_1r_3}- \frac{\chi_{18}}{r_3r_2}= \frac{\chi_{18}}{r_1r_2r_3} (r_3+r_2-r_1)=0.
\end{equation}
The group
\begin{equation}\label{gamma30}
  \Gamma_{3,0}(2):=\left\lbrace\gamma
  \in\SpZthree\, |\, C\equiv 0 \mod 2\right\rbrace
\end{equation}
has index 135 in $\SpZthree$, using Maple\textsuperscript{\textregistered} (or it can also be seen by using Igusa's going down  process),  we found that the subspace of the $15$-dimensional space $W$ invariant under the  action of $\Gamma_{3,0}(2)$
is one-dimensional, and is thus spanned by the manifestly $\Gamma_{3,0}(2)$-invariant modular form
\begin{equation}\label{HF1}
 H(F_1)(\tau):=\frac{\chi_{18}}{\prod_{m''}\tt{0}{m''}(\tau)}
\end{equation}
corresponding to the Fano configuration $F_1:=\left\lbrace \tc{0}{m''}\right\rbrace$ consisting of all characteristics with top vector zero

We conclude this section by remarking that all the functions $H(F)$ and $H(P)$ induce the rational map
$$
  \phi:\aa_3(2)\dasharrow\PP^{14}=\PP(W).
$$
Thus one can easily see \cite[Thm.~7.1 (b)]{sturmfels}:
\begin{prop}
The base locus of $\phi$ consists of reducible points in $\aa_3(2)$.
\end{prop}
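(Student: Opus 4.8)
The plan is to make the base locus completely explicit by using the product formula for $H(F)$ and then reducing to a combinatorial statement about the simultaneous vanishing of even theta constants. First I would rewrite $H(F)=\chi_{18}/\prod_{n\in F}\t_n=\prod_{n\ \text{even},\,n\notin F}\t_n$, so that $H(F)$ is the product of the $36-8=28$ even theta constants whose characteristic lies outside the Fano configuration $F$. Since the Riemann relations (\ref{Riemchi}) exhibit every $H(P)$ as an element of the span $W$ of the $H(F)$, the base locus of $\phi$ is simply the common zero locus of the $H(F)$. For $\tau\in\aa_3(2)$ set $Z(\tau):=\{n\ \text{even}:\t_n(\tau)=0\}$. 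Then $H(F)(\tau)=0$ exactly when some even $n\notin F$ satisfies $\t_n(\tau)=0$, i.e.\ exactly when $Z(\tau)\not\subseteq F$. Hence $\tau$ is a base point if and only if $Z(\tau)$ is not contained in any Fano configuration, and the whole problem becomes: for which $\tau$ does $Z(\tau)$ fit inside a single Fano?

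Next I would dispose of the indecomposable points. A principally polarized abelian threefold is indecomposable precisely when it is the Jacobian of a smooth genus $3$ curve, and for such $\tau$ one has $|Z(\tau)|\le 1$: if the curve is a plane quartic then $\chi_{18}(\tau)\ne 0$ and $Z(\tau)=\varnothing$, while if it is hyperelliptic then exactly one even theta constant vanishes, as recalled above. Now a single even characteristic always lies in a Fano configuration: the parity quadratic form $q(m)={}^tm'm''$ on $\FF_2^6$ is the split form of $+$ type, its maximal totally singular subspaces are exactly the Fano configurations, and every singular vector extends to such a subspace ($0$ lies in all $30$ Fanos, and each nonzero even characteristic in exactly six). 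Therefore $Z(\tau)$ is contained in some Fano and $\tau$ is not a base point; this already shows that the base locus is contained in the reducible locus.

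It remains to check that every reducible point is a base point, which I would do using the factorization of theta functions on products. For $X=E_1\times E_2\times E_3$ an even theta constant vanishes iff two of its three genus-one components equal the odd characteristic $\tc{1}{1}$, which yields $|Z(\tau)|=9>8$; for $X=E\times A$ with $A$ an indecomposable surface the vanishing even characteristics are the six of the form $(\tc{1}{1},m_2)$ with $m_2$ odd in genus two, and a short linear-algebra computation shows these span a $5$-dimensional subspace of $\FF_2^6$. In either case $Z(\tau)$ cannot lie inside a $3$-dimensional totally singular subspace, so these points are base points; since the base locus is closed, the Satake boundary strata (limits of such decomposable abelian varieties) are base points as well. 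Combining the two inclusions identifies the base locus with the reducible locus.

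The main obstacle is the reducible computation: one must read off the vanishing sets $Z(\tau)$ precisely from the factorization of theta functions on products and verify the combinatorial fact that they are too large, or span too much, to be contained in a Lagrangian that is totally singular for $q$. By contrast, once the product formula $H(F)=\prod_{n\notin F}\t_n$ is in hand, the translation ``base point $\iff$ $Z(\tau)$ not contained in a Fano'' and the disposal of the indecomposable case (via the hyperelliptic characterization and the geometry of the quadric) are immediate, which is why the statement can be seen easily.
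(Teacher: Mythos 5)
Your proof is correct, and its skeleton matches the paper's: both reduce the base locus to the common vanishing of the monomials $H(F)=\prod_{n\ \mathrm{even},\,n\notin F}\t_n$, dispose of indecomposable points via the fact that a Jacobian has at most one vanishing even theta constant, and then show that reducible points kill every monomial. The key step, however, is carried out by a genuinely different mechanism. The paper quotes the characterization from \cite{frsmhp} --- at a reducible point at least $6$ theta constants with \emph{azygetic} characteristics vanish, and this characterizes the reducible locus --- and then observes that a G\"opel system is syzygetic, so the vanishing set can never fit inside the even coset attached to any G\"opel system; this one line covers the whole closed reducible locus in $\aa_3(2)$, boundary included, at the cost of an external citation. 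You instead compute the vanishing sets $Z(\tau)$ explicitly from the factorization of theta constants on products: $|Z(\tau)|=9>8$ for $E_1\times E_2\times E_3$, and for $E\times A$ the six characteristics $\left(\tc{1}{1},m_2\right)$ with $m_2$ odd span a $5$-dimensional subspace of $\FF_2^6$, too large for a $3$-dimensional Fano configuration; you then invoke closedness of the base locus to sweep in the Satake boundary as limits of decomposable period matrices. Your route is self-contained and, as a bonus, supplies a justification the paper leaves implicit: that a single vanishing even characteristic always lies in some Fano configuration, which you derive correctly from the identification of Fano configurations with the $30$ maximal totally singular subspaces of the plus-type quadric on $\FF_2^6$ (with each nonzero even characteristic on exactly $6$ of them) and Witt extension. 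The trade-off is that your boundary treatment is indirect (closure of the decomposable locus) where the paper's cited criterion applies uniformly, and your case analysis must enumerate the two decomposition types, whereas the syzygy argument needs no case distinction; both arguments are complete as given.
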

\begin{proof}
We can choose a basis of $W$ consisting of monomials in the theta constants. Since for a Jacobian of a smooth hyperelliptic genus 3 curve, precisely one theta constant vanishes, the map $\phi$ is still well-defined on the locus of hyperelliptic Jacobians. However, for a reducible point at least 6 theta constants with azygetic characteristics vanish (this condition characterizes the reducible locus, see  \cite{frsmhp}). The complementary set of characteristics in each  monomial  defining the map $\phi$ is a G\"opel system, i.e.~a syzygetic octet of characteristics. A G\"opel system cannot contain an azygetic triple of characteristics, and thus all monomials defining the map $\phi$ vanish identically on the reducible locus, so that the map $\phi$ is undefined there.
\end{proof}

We observe that  the  map is defined exactly along $\calM_3(2)$. In particular, the map $\phi$ is well-defined on the hyperelliptic locus.

\section{The Coble quartic}
\label{sec:coble}
We combine the results and constructions summarized above to obtain a formula for the Coble quartic as a suitable symmetrization.

By the work of Coble \cite[p.~196]{coblebook}, the Coble quartic is the linear combination of 15 terms that  span an irreducible representation of $\SpFthree$, which we now describe. We denote theta functions of the second order by
\begin{equation}\label{x}
x_{\ep}:=\T[\ep](\tau,z)=\tt{\ep}{0}(2\tau,2z)
\end{equation}
for $\ep\in\FF_2^g$; these are coordinates for the projective space that is the target of the Kummer map. Let then $W'$ be the $15$-dimensional  vector space spanned by quartics $Q_1,\ldots,Q_{15}$ in $x_\ep$ invariant under translations, enumerated explicitly in the next section, following Coble's notation in \cite{coblebook} (and also given in \cite{sturmfels}). Here we only note that
$$Q_1:=x_{000}^4 +x_{001}^4 +x_{010}^4 +x_{100}^4 +x_{110}^4 +x_{101}^4 +x_{011}^4 +x_{111}^4$$
spans the unique $\Gamma_{3,0}(2)/\Gamma_3(2)$ invariant in $W'$. This $15$-dimensional space $W'$ forms an irreducible representation of the group $\SpFthree$, where $\SpFthree$ acts on theta functions of the second order (explicit formulas are given in the next section). It can be verified that the representation $W'$ is in fact isomorphic to $W$, but we keep the notation distinct. Finally, we set
$R(\tau, z):=H(F_1)(\tau)Q_1(\tau, z)$ to be the product of the two such invariants, and obtain a formula for
the Coble quartic.
\begin{thm}\label{cobleformula}
Up to a constant factor, the Coble quartic is equal to
$$
 \sum_{\gamma\in\SpZthree/\Gamma_{3,0}(2)} \gamma(R(\tau, z))=$$
$$\sum_{\gamma\in\SpZthree/\Gamma_{3,0}(2)}
\det(C\tau+D)^{-16} e^{8\pi i  z^t(C\tau+D)^{-1}z} R(\gamma\tau,(C\tau+D)^{-t}z).$$
\end{thm}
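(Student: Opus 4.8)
The strategy is to identify the symmetrization $\sum_{\gamma} \gamma(R(\tau,z))$ as the unique (up to scalar) $\SpZthree$-invariant element inside the tensor product $W \otimes W'$, and then invoke the classical fact (due to Coble) that the Coble quartic is precisely such an invariant. The key representation-theoretic input, supplied by Freitag and recorded in the acknowledgments, is that $W \otimes W'$ contains exactly one copy of the trivial representation of $\SpFthree$; since $W$ and $W'$ are each irreducible of dimension $15$, this determines the invariant up to a constant, so it suffices to produce a nonzero invariant of the correct weight and show it has the shape claimed.

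First I would set up the bookkeeping for the symmetrization. The element $R(\tau,z) = H(F_1)(\tau)Q_1(\tau,z)$ is, by construction, invariant under $\Gamma_{3,0}(2)$: indeed $H(F_1)$ is manifestly $\Gamma_{3,0}(2)$-invariant by (\ref{HF1}), and $Q_1$ spans the unique $\Gamma_{3,0}(2)/\Gamma_3(2)$-invariant line in $W'$. Hence $R$ is a well-defined function on the $135$ cosets, and the sum over $\SpZthree/\Gamma_{3,0}(2)$ is independent of coset representatives. The second displayed line is just the explicit form of the $\gamma$-action: $H(F_1)$ transforms as a weight-$14$ modular form (with trivial character, by the theorem of the previous section) and $Q_1$ as a weight-$2$ Jacobi form in $z$, so the product $R$ carries the automorphy factor $\det(C\tau+D)^{16}e^{-8\pi i z^t(C\tau+D)^{-1}z}$, whose inverse appears in the formula. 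I would verify that these two automorphy factors are compatible, so that the summand is genuinely the image $\gamma(R)$ under the natural action on Jacobi forms of weight $(16,4)$.

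Next I would argue invariance and nonvanishing. The sum is $\SpZthree$-invariant by construction (averaging over the quotient), and since $\Gamma_3(2)$ acts trivially on both $W$ and $W'$, the invariance descends to an $\SpFthree$-invariance; thus the symmetrization lands in the one-dimensional space of invariants in $W\otimes W'$. To conclude it equals the Coble quartic up to scalar, I must show the symmetrization does not vanish identically — equivalently, that the unique trivial subrepresentation is not killed by the projection from $R$. I would do this by pairing against the known nonvanishing invariant: since $R$ has nonzero component along the $\Gamma_{3,0}(2)$-invariant vectors $v\otimes v'$ (with $v=H(F_1)$, $v'=Q_1$) and the trivial representation in $W\otimes W'$ is generated by the $\SpFthree$-orbit sum of such a vector, the averaged element is a nonzero multiple of the generator of the invariant line.

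The main obstacle I expect is the nonvanishing step, i.e.\ ruling out the possibility that the symmetrization collapses to zero. This is not automatic: the projection of $v\otimes v'$ onto the trivial summand could in principle vanish if $v\otimes v'$ happened to be orthogonal (in the unique invariant bilinear pairing $W\otimes W' \to \mathbb{C}$) to the invariant vector. To settle this, I would use that $W\cong W'$ as $\SpFthree$-representations, so the pairing is (up to scalar) the $\SpFthree$-invariant inner product, and the self-pairing $\langle v, v'\rangle$ under the identification $W\cong W'$ is a sum of nonnegative terms that cannot all vanish — concretely, $Q_1$ and $H(F_1)$ correspond to matching highest-weight-type vectors under the isomorphism, so their pairing is strictly positive. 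Once nonvanishing is established, the one-dimensionality of the invariants forces equality with the Coble quartic up to the asserted constant factor, completing the proof.
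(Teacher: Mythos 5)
Your overall architecture coincides with the paper's proof: both arguments rest on Freitag's computation that $W\otimes W'$ decomposes into irreducibles of dimensions $1,35,84,105$, each with multiplicity one; both use that $H(F_1)$ and $Q_1$ span the unique $\Gamma_{3,0}(2)/\Gamma_3(2)$-invariant lines in $W$ and $W'$; and both conclude that the averaged element $v_0=\sum_{\gamma\in\SpFthree/\Gamma_{3,0}(2)}\gamma(v\otimes v')$ is $\SpFthree$-invariant, hence spans the trivial summand and is proportional to the Coble quartic \emph{provided it is nonzero}. You also correctly reduce the nonvanishing of $v_0$ to the nonvanishing of the invariant pairing $W\otimes W'\to\CC$ evaluated on $v\otimes v'$, and you correctly flag this as the crux.

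The gap is in how you then settle that crux. Your claim that the pairing of $H(F_1)$ with $Q_1$ is ``a sum of nonnegative terms'' because they are ``matching highest-weight-type vectors'' is not a proof: $\SpFthree$ is a finite group, so there is no highest-weight theory to invoke, and the invariant pairing realizing the trivial summand of $W\otimes W'$ is a \emph{bilinear} form, not a Hermitian one --- bilinear forms over $\CC$ have plenty of isotropic vectors, so no positivity is available in the abstract. (An invariant Hermitian inner product does exist, but it pairs $W$ with its complex conjugate, not with $W'$, and does not compute the projection onto the trivial summand of $W\otimes W'$.) What must be checked is a concrete, function-level statement, and this is exactly what the paper does: after identifying $W'\cong W$ (an identification which necessarily sends $Q_1$ to a multiple of $H(F_1)$, by uniqueness of the $\Gamma_{3,0}(2)$-invariant lines), the symmetrization becomes $\sum_{\gamma}\gamma\bigl(H(F_1)^2\bigr)$, and the theta transformation formula of \cite{smthetanullw} shows this equals $\vert\Gamma_{3,0}(2)/\Gamma_3(2)\vert\sum_{i=1}^{135}H(G_i)^2$, the sum running over all G\"opel systems \emph{with every coefficient equal to $+1$} --- the eighth-root-of-unity multipliers drop out because $H(G_i)^2$ has degree $56$ in theta constants, divisible by $8$. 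Nonvanishing then follows because the first Fourier coefficient (in a lexicographic ordering) of each $H(G_i)^2$ is positive, so the $135$ terms cannot cancel. Without this computation, or some substitute for it (e.g.\ evaluating the orbit sum somewhere), your argument does not exclude $v_0=0$, which is precisely the possibility you set out to rule out.
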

\begin{proof}
Using Magma\textsuperscript{\textregistered}, Eberhard Freitag  determined the decomposition of $W\otimes W'$ into irreducible representations under the action of $\SpFthree$. It turns out that the dimensions of the irreducible summands are equal to $1,35,84,105$,
each occurring with multiplicity one, and since the Coble quartic is invariant under $\SpFthree$, it must generate the 1-dimensional trivial subrepresentation of $W\otimes W'$.  (From Schur's lemma it follows a priori that there is at most a single copy of the trivial representation in this tensor product, but the computation above actually guarantees the existence of this invariant, which thus proves the existence of the Coble quartic!)

Recall that we have $\Gamma_3(2)\subset\Gamma_{3,0}(2)\subset\SpZthree$, so we have $$\Gamma_{3,0}(2)/\Gamma_3(2)\subset\SpFthree=\SpZthree/\Gamma_3(2).$$
Since the modular form $H(F_1)$ defined in (\ref{HF1}) spans the $\Gamma_{3,0}(2)/\Gamma_3(2)$ invariant line in $W$,
and similarly $Q_1$ spans the $\Gamma_{3,0}(2)/\Gamma_3(2)$ invariant line in $W'$, the expression
$$v_0= \sum_{\gamma\in\SpFthree/\Gamma_{3,0}(2)} \gamma(v\otimes v')\in W\otimes W'$$
is invariant under $\SpFthree$, and thus, unless it is zero, generates the one-dimensional irreducible
summand of $W\otimes W'$. Indeed,
to prove that  $v_0$ is not identically zero, we
recall that in fact $W$ and $W'$ are isomorphic representations, this isomorphism must send $H(F_1)$ to $Q_1$, and
it is enough to check that the sum above is non-zero under such an identification. To this end, we compute using
the transformation formula for theta constants given in \cite{smthetanullw}
$$
  \sum_{\gamma\in\SpFthree} \gamma(H(F_1)^2)= \vert\Gamma_{3,0}(2)/\Gamma_3(2)\vert\sum_{i=1}^{135} H(G_i)^2,
$$
where $G_1,\ldots,G_{135}$ is some enumeration of all the G\"opel systems.
This form is not identically  zero, in fact the first Fourier coefficient  (for some  lexicographic ordering)
of each $H(G_i)^2$ is positive, hence $v_0\in W\otimes W$ cannot be 0, and must be proportional to the Coble quartic.
\end{proof}

\section{An explicit formula for the Coble quartic}
Using a computer, we compute explicitly the expression for the Coble quartic given by Theorem \ref{cobleformula}, by determining the orbit of $H(F_1)Q_1$ under the group $\SpFthree/\Gamma_{3,0}(2)$. We recall that  the symplectic group acts on the theta functions with characteristics and of the second order by the ``slash'' action according to the following rules
$$\gamma(\tt\alpha\beta(\tau, z)):=
 \det(C\tau+D)^{-1/2} e^{\pi i  z^t(C\tau+D)^{-1}z} \tt\alpha\beta(\gamma\tau,(C\tau+D)^{-t}z).$$
$$\gamma(\T[\ep](\tau, z)):=
 \det(C\tau+D)^{-1/2} e^{2\pi i  z^t(C\tau+D)^{-1}z} \T[\ep](\gamma\tau,(C\tau+D)^{-t}z),$$
and this action restricts to an action on the corresponding theta constants. In working with this action, a recurrent difficulty is the multiplier $\kappa$ that appears in the transformation formula for theta function. However, this multiplier depends only on $\gamma$, and not on the characteristics, and since it is an eighth root of unity, it disappears when working with polynomials in theta constants of degree divisible by 8 --- which is always our situation.

Since the conventions in the literature vary, and there are various typos in signs,
for easy reference here are the relevant formulas from \cite{igusabook},\cite{runge1},\cite{smlevel2}
(note especially that in the first  formula the theta function on the right-hand-side is written with characteristic in $\FF_2^6$, which accounts for the extra sign $(-1)^{\lfloor\frac{\beta+S\alpha -\mathop{diag} S}2\rfloor}$).
Omitting the multiplier, we then have the following transformation formulas
$$\left(\begin{matrix} 1&S\\ 0&1\end{matrix}\right)\tt\alpha\beta=\left(\frac{1+i}{\sqrt2}\right)^{\alpha^t (-S\alpha-2\mathop{diag} S+4\lfloor\frac{\beta+S\alpha +\mathop{diag} S}2\rfloor)}\tt\alpha{\beta+S\alpha +\mathop{diag} S},$$
$$\left(\begin{matrix} 0&1\\ -1&0\end{matrix}\right)\tt\alpha\beta=\left(\frac{1+i}{\sqrt2}\right)^{-2\alpha^t
\beta}\tt\beta\alpha,$$
$$\left(\begin{matrix} 1&S\\ 0&1\end{matrix}\right)\T[\ep]=i^{\ep^t S\ep}\T[\ep],$$
$$\left(\begin{matrix} 0&1\\ -1&0\end{matrix}\right)\T[\ep]=\left(\frac{1}{\sqrt2}\right)^g\sum_\alpha (-1)^{\alpha^t\ep}\T[\alpha].$$

To compute the Coble quartic, we then enumerate the 30 Fano configurations and the 105  Pascal configurations. For each Pascal configuration
$P$ there exists a unique pair of Fano configurations $F',F''$ such that there exist three
syzygetic quartets of characteristics $S_1,S_2,S_3$ satisfying
$$
 P=S_2\sqcup S_3,\quad F'=S_1\sqcup S_2,\quad F''=S_1\sqcup S_3.
$$
As described above, Riemann's quartic addition theorem then yields for any Pascal configuration $P$ the expression
$H(P)=\pm H(F')\pm H(F'')$, where the signs are given explicitly by Igusa
\cite{igusachristoffel}. Once this is done, in the same manner one investigates Riemann's
quartic addition theorem given by 3 syzygetic quartets where
each $S_i\sqcup S_j$ is a Pascal configuration --- substituting the expressions for $H(P)$
then yields various relations among the modular forms $H(F)$. Solving these allows one to find
an explicit basis for the space $W$. Explicitly, such a basis can be chosen to be given by
the following 15 Fano configurations:
$$F_1=\left\lbrace\stc{000}{000}, \stc{000}{001},\stc{000}{010}, \stc{000}{011}, \stc{000}{100},\stc{000}{101}, \stc{000}{110}, \stc{000}{111}\right\rbrace,$$
$$F_2=\left\lbrace\stc{000}{000}, \stc{000}{001}, \stc{000}{010}, \stc{000}{011}, \stc{100}{000}, \stc{100}{001}, \stc{100}{010}, \stc{100}{011}\right\rbrace,$$
$$F_3=\left\lbrace\stc{000}{000}, \stc{000}{001}, \stc{000}{100}, \stc{000}{101}, \stc{010}{000}, \stc{010}{001}, \stc{010}{100}, \stc{010}{101}\right\rbrace,$$
$$F_4=\left\lbrace\stc{000}{000}, \stc{000}{001}, \stc{000}{110}, \stc{000}{111}, \stc{110}{000}, \stc{110}{001}, \stc{110}{110}, \stc{110}{111}\right\rbrace,$$
$$F_5=\left\lbrace\stc{000}{000}, \stc{000}{001}, \stc{010}{000}, \stc{010}{001}, \stc{100}{000}, \stc{100}{001}, \stc{110}{000}, \stc{110}{001}\right\rbrace,$$
$$F_6=\left\lbrace\stc{000}{000}, \stc{000}{010}, \stc{000}{100}, \stc{000}{110}, \stc{001}{000}, \stc{001}{010}, \stc{001}{100}, \stc{001}{110}\right\rbrace,$$
$$F_7=\left\lbrace\stc{000}{000}, \stc{000}{010}, \stc{000}{101}, \stc{000}{111}, \stc{101}{000}, \stc{101}{010}, \stc{101}{101}, \stc{101}{111}\right\rbrace,$$
$$F_8=\left\lbrace\stc{000}{000}, \stc{000}{010}, \stc{001}{000}, \stc{001}{010}, \stc{100}{000}, \stc{100}{010}, \stc{101}{000}, \stc{101}{010}\right\rbrace,$$
$$F_9=\left\lbrace\stc{000}{000}, \stc{000}{011}, \stc{000}{100}, \stc{000}{111}, \stc{011}{000}, \stc{011}{011}, \stc{011}{100}, \stc{011}{111}\right\rbrace,$$
$$F_{10}=\left\lbrace\stc{000}{000}, \stc{000}{011}, \stc{000}{101}, \stc{000}{110}, \stc{111}{000}, \stc{111}{011}, \stc{111}{101}, \stc{111}{110}\right\rbrace,$$
$$F_{11}=\left\lbrace\stc{000}{000}, \stc{000}{011}, \stc{011}{000}, \stc{011}{011}, \stc{100}{000}, \stc{100}{011}, \stc{111}{000}, \stc{111}{011}\right\rbrace,$$
$$F_{12}=\left\lbrace\stc{000}{000}, \stc{000}{100}, \stc{001}{000}, \stc{001}{100}, \stc{010}{000}, \stc{010}{100}, \stc{011}{000}, \stc{011}{100}\right\rbrace,$$
$$F_{13}=\left\lbrace\stc{000}{000}, \stc{000}{101}, \stc{010}{000}, \stc{010}{101}, \stc{101}{000}, \stc{101}{101}, \stc{111}{000}, \stc{111}{101}\right\rbrace,$$
$$F_{14}=\left\lbrace\stc{000}{000}, \stc{000}{110}, \stc{001}{000}, \stc{001}{110}, \stc{110}{000}, \stc{110}{110}, \stc{111}{000}, \stc{111}{110}\right\rbrace,$$
$$F_{15}=\left\lbrace\stc{000}{000}, \stc{000}{111}, \stc{011}{000}, \stc{011}{111}, \stc{101}{000}, \stc{101}{111}, \stc{110}{000}, \stc{110}{111}\right\rbrace.$$
We note that a different labeling of the Fano configurations is chosen in \cite{sturmfels}, where they are explicitly labeled by permutations of  the 7 elements of an Aronhold set. The choice of the above 15 Fano configurations giving a basis of
$W$ is of course not canonical, but just one possible choice. Note also that our Fano configurations are unordered and we don't have signs associated to them --- the
signs in our computations come from the theta transformation formulas and Riemann's quartic addition formula.

We now list the 15 invariant quartics in theta functions of the second order as given by Coble \cite{coblebook}, and also in \cite{sturmfels}: they are
$$Q_{000}:=Q_1=\sum_{\ep\in\FF_2^3} x_{\ep}^4;\quad Q_{\alpha}:=\frac12 \sum_{\ep\in\FF_2^3} x_{\ep}^2 x_{\ep+\alpha}^2;\quad Q_{\alpha'}:=\frac14 \sum_{\ep\in\FF_2^3}\prod_{\mu\in\alpha^\perp} x_{\ep+\mu},$$
for any $\alpha\in\FF_2^3\setminus\lbrace 0\rbrace$, where we recall (\ref{x}) that $x_\ep$ denotes $\Theta[\ep](\tau,z)$, and by $\alpha^\perp$ we mean the two dimensional subspace of $\FF_2^3$ orthogonal to $\alpha$ with respect to the usual scalar product, so that for example $011^\perp=\lbrace 000,011,100,111\rbrace$. The coefficients $1/2$ and $1/4$ are designed to make each monomial appear with coefficient one. Though the following formula would be simpler without them, we keep the notation above for easy comparision with \cite{coblebook} and \cite{sturmfels}.

The result of performing the computations described above, expressing all $H(F)$ in terms of the basis given, and
using the theta transformation formula is then the following
\begin{thm}\label{explicitcoble}
The Coble quartic can be written as a polynomial with 134 monomials as
$$
\begin{aligned}
\! C(&\tau,z)= s_{1}Q_{000}-2(s_{1}+2s_{6})Q_{001}-2(s_{1}+2s_{3})Q_{010}-2(s_{1}+2s_{9})Q_{011}\\
&-2(s_{1}+2s_{2})Q_{100}-2(s_{1}+2s_{7})Q_{101}-2(s_{1}+2s_{4})Q_{110}\\ &-2(s_{1}
-2s_{10})Q_{111}
+8(s_{1}+s_{2}+s_{3}+s_{4}+2s_{5})Q_{001}'\\ &+8(s_{1}+s_{2}+s_{6}+s_{7}+2s_{8})Q_{010}'
+8(s_{1}+s_{2}+s_{9}-s_{10}+2s_{11})Q_{011}'\\ &+8(s_{1}+s_{3}+s_{6}+s_{9}+2s_{12})Q_{100}'
+8(s_{1}+s_{3}+s_{7}-s_{10}+2s_{13})Q_{101}'\\ &+8(s_{1}+s_{4}+s_{6}-s_{10}+2s_{14})Q_{110}'
+8(s_{1}+s_{4}+s_{7}+s_{9}+2s_{15})Q_{111}',
\end{aligned}
$$
where we denote $s_i:=H(F_i)$ for the basis of $W$ given above.
\end{thm}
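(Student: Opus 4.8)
The plan is to turn the abstract symmetrization of Theorem \ref{cobleformula} into an explicit polynomial. By that theorem the Coble quartic is, up to scale, $\sum_{\gamma\in\SpZthree/\Gamma_{3,0}(2)}\gamma\bigl(H(F_1)Q_1\bigr)$. Because the slash action adds weights, each summand factors as $\gamma(H(F_1))\cdot\gamma(Q_1)$, the first factor lying in $W$ and the second in $W'$; and because both $H(F_1)$ and $Q_1$ are $\Gamma_{3,0}(2)$-invariant, the summand depends only on the coset $\gamma\,\Gamma_{3,0}(2)$, and the $135$ cosets are indexed by the $135$ G\"opel systems $G$, with representative $\gamma_G$ any symplectic element carrying the all-even Lagrangian $F_1$ to the even coset of $G$. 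Thus the whole problem reduces to evaluating $C(\tau,z)=\sum_G \gamma_G(H(F_1))\cdot\gamma_G(Q_1)$; once each factor is expanded in the bases $\{s_i\}$ of $W$ and $\{Q_j\}$ of $W'$ this becomes a bilinear expression $\sum_{i,j}c_{ij}\,s_i\,Q_j$, and grouping by $Q_j$ gives the stated form (the overall scalar being fixed by requiring the coefficient of $Q_{000}$ to equal $s_1$).

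Two expansions have to be carried out. First I would express $\gamma_G(H(F_1))$ in the basis $s_i=H(F_i)$. For a Fano system this factor is directly $\pm\chi_{18}/\prod_n\t_n$, so it remains to write the $30$ Fano forms in terms of the chosen $15$; for a Pascal system one first applies Riemann's quartic addition theorem in the shape $H(P)=\pm H(F')\pm H(F'')$, the two Fano systems and the signs being supplied explicitly by Igusa \cite{igusachristoffel}, reducing again to Fano forms. Solving the resulting linear system of Riemann relations, whose consistency is guaranteed by (\ref{Riemchi}), yields each Fano form as an explicit integer combination of $s_1,\dots,s_{15}$. Second I would expand $\gamma_G(Q_1)$ in the basis $Q_1,\dots,Q_{15}$ of $W'$ by applying the slash-action formulas for second-order theta functions recorded before the statement to a presentation of $\gamma_G$ in the generators $\left(\begin{smallmatrix}1&S\\0&1\end{smallmatrix}\right)$ and $\left(\begin{smallmatrix}0&1\\-1&0\end{smallmatrix}\right)$; the first acts on the $x_\ep$ by fourth roots of unity and the second by a Hadamard-type transform, so each $\gamma_G(Q_1)$ comes out as an explicit linear combination of the $Q_j$.

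Assembling the $135$ products and collecting the coefficient of every monomial $s_i Q_j$ produces the invariant element of $W\otimes W'$ spanning its one-dimensional trivial summand, which by Theorem \ref{cobleformula} is the Coble quartic. Reading off and grouping by $Q_j$ gives the displayed polynomial; fully expanding the $Q_j$ into quartic monomials in the $x_\ep$ and the coefficients into the $s_i$ then yields exactly $134$ monomials. This is precisely the computation performed in Maple.

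The difficulty is not conceptual but lies in the control of signs and the size of the orbit. The eighth-root-of-unity multiplier $\kappa(\gamma)$ in the theta transformation law must cancel, which it does because every monomial has degree divisible by $8$; far more delicate is keeping the signs in Riemann's addition formula mutually consistent across all $105$ Pascal systems at once, since a single inconsistent choice would corrupt the linear relations defining the $s_i$. I would fix these signs once from Igusa's determination of the constant in the addition theorem, propagate them through the linear algebra, and finally cross-check the output against the expression of \cite{sturmfels} to confirm that no sign or scaling error has survived.
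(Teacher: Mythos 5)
Your proposal is correct and takes essentially the same route as the paper: the paper likewise evaluates the symmetrization of Theorem \ref{cobleformula} over the $135$ cosets of $\Gamma_{3,0}(2)$ by applying the explicit slash-action formulas on the generators $\left(\begin{smallmatrix}1&S\\0&1\end{smallmatrix}\right)$ and $\left(\begin{smallmatrix}0&1\\-1&0\end{smallmatrix}\right)$, reduces Pascal configurations to Fano ones via Riemann's quartic addition theorem with the signs taken from Igusa, solves the resulting linear relations to express all $H(F)$ in the basis $s_1,\dots,s_{15}$, and assembles the $135$ products in Maple, with the eighth-root-of-unity multiplier disappearing because the total degree in theta constants is divisible by $8$. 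Your explicit indexing of the cosets by G\"opel systems is only a cosmetic refinement of what the paper does implicitly.
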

\begin{rem}
Note that our expression for the Coble quartic is in terms of Fano configurations
of characteristics, and as such it has few terms. As explained in \cite{sturmfels},
each $s_i$ is in fact a degree 28 polynomial in theta constants of the second order, with a
huge number of monomials. A completely explicit formula for the Coble quartic, as a
polynomial in theta constants of the second order and $Q_i$'s, with 372060 monomials
altogether, is obtained in \cite[Sec.~7]{sturmfels}. In their notation we have
$r=H(F_1)$, $s_{\s}=-2H(F_1)\pm 4H(F_{\s})$, and a similar expression can be obtained for the $t_{\s}$.
Moreover, note that in the above formula if we take $-s_{10}$ instead of $s_{10}$, then all the signs becomes pluses.

Note that formula (\ref{kumsurexpl}) for the universal Kummer surface and the formula above for the Coble quartic, in terms of G\"opel systems, are remarkably similar, and it is of course natural to wonder whether there may be a similar interesting hypersurface in higher genus.
We hope that our formulas may lead to a better further understanding of the universal Kummer surface and of the Coble quartic.
\end{rem}
All of the computations described above were performed using Maple\textsuperscript{\textregistered},
and take a few minutes for a straightforward  unoptimized code on a regular PC.

\end{document}